\numberwithin{equation}{section}
\theoremstyle{plain} %% This is the default, anyway
\newtheorem{theorem}[equation]{Theorem}
\newtheorem{corollary}[equation]{Corollary}
\newtheorem{lemma}[equation]{Lemma}
\newtheorem{proposition}[equation]{Proposition}
\theoremstyle{definition}
\newtheorem{example}[equation]{Example}
\newtheorem{conjecture}[equation]{Conjecture}
\theoremstyle{remark}
\newtheorem{remark}[equation]{Remark}
\newtheorem{question}[equation]{Question}
\def\C{{\mathbb{C}}}
\def\P{{\mathbb{P}}}
\def\Q{{\mathbb{Q}}}
\def\Z{{\mathbb{Z}}}
\def\Ch{{\rm Ch}}
\def\sp{{\rm SP}}
\def\supp{{\rm supp}}
\def\cZ{{\mathcal{Z}}}
\title{The multiplicative group action on singular varieties and Chow varieties}
\author{Wenchuan Hu}
\keywords{Holomorphic vector field, Multiplicative group action, Algebraic cycle, Chow group, Chow variety}
\address{
School of Mathematics\\
Sichuan University\\
Chengdu 610064\\
P. R. China
}
\email{huwenchuan@gmail.com}
\begin{document}

\begin{abstract}
We answer two questions of Carrell on a singular complex projective variety admitting the multiplicative group action,  one positively and the other negatively. The results are applied to Chow varieties and we obtain  Chow groups of 0-cycles and Lawson homology groups of 1-cycles for Chow varieties. A short survey on the structure of the Chow varieties is included for comparison and completeness. Moreover, we  give counterexamples to Shafarevich's question on the rationality of the irreducible components of Chow varieties.
\end{abstract}

\maketitle
\pagestyle{myheadings}
 \markright{Holomorphic vector field and Chow groups}

\tableofcontents

\section{Introduction}

% Carefully survey results in smooth case and why this problem is important\%

Let $V$ be a holomorphic vector field defined on a  projective algebraic variety
$X$. The zero subscheme  $Z$ is the subspace of $X$ defined by the ideal generated by $V\mathcal{O}_X$ and we denote it by $X^{V}$.

The existence of a holomorphic vector field with zeroes on a
smooth projective variety imposes restrictions on the topology  of the manifold. For examples, the Hodge numbers
$h^{p,q}(X)=0$ if $|p-q|>\dim Z$ (see \cite{Carrell-Lieberman}). For a smooth complex projective variety $X$ admitting a $\C^*$-action, Bialynicki-Birula structure theorem describes
the relation between the structure of $X$ and that of the fixed points set(\cite{Bialynicki-Birula2}). In \cite{Hu0}, one get the identified the algebraic geometric invariants
such as the Chow group, Lawson homology with the corresponding singular homology with rational coefficients in the case that $X^V$ is of zero dimensional.

According to Lieberman (\cite{Lieberman1}), a holomorphic vector field $V$ on a complex algebraic projective variety $X$ with nonempty zeroes is equivalent to
the 1-parameter group $G$ generated by $V$ is a product of $\C^*$'s and at most one $\C$'s. This induces us to study the structure of $X$ admitting a
$\C^*$-action or a $\C$-action.

In this paper, we consider the case that $X$ is a singular projective variety admitting a $\C$-action (resp. $\C^*$-action). In these case, the relation between the structure of $X$ and that of the fixed point set
is subtle.  A general result of Bialynicki-Birula says that $X$ and the fixed point set are $\C$-equivalent(resp. $\C^*$-equivalent).
In \cite{Hu0}, we got the Chow group of zero cycles and Lawson homology group of 1-cycles for $X$ admitting a $\C$-action.

When $X$ is singular and admitting a $\C^*$-action, the Bialynicki-Birula type structure theorem also holds for singular homology groups if the action is  ``good" in sense of \cite{Carrell-Goresky}.
In general, it does not hold for a singular $X$ admitting a $\C^*$-action without additional conditions.
For a singular variety $X$ admitting the certain  $\C^*$-action with isolated fixed points, Carrell asked if  the odd Betti numbers  of $X$ vanish,etc.
In section \ref{section4}, we ask parallel questions to those of Carrell and gives  answers to all of them. We give counterexamples to  some of these questions.
We compute the Chow groups of 0-cycles for singular varieties admitting a $\C^*$-action with isolated fixed points. As a contrast to projective varieties admitting a $\C$-action,
 the parallel result for Lawson homology group of 1-cycles does not holds any more (see Example \ref{Exam4.26}).

In section \ref{Section5}, we briefly review and summarize some known  algebraic and topological invariants for Chow varieties $C_{p,d}(\P^n)$
 parameterizing effective $p$-cycles of degree $d$ in the complex projective space $\P^n$.  We  give a counterexample to the question of  Shafarevich on the rationality of the irreducible components of Chow varieties,
 based on the work of Eisenbud, Harris, Mumford, etc.

 As applications of section \ref{section2} and \ref{section4}, we
 compute the chow group of zero cycles and Lawson homology groups of  1-cycles for Chow varieties.

\section{Invariants under the additive group action}
\label{section2}
Let $X$ be a possible singular complex
projective algebraic variety $X$ admitting an additive group action.
Our main purpose is to compare certain algebraic and topological invariants (such as the Chow group of zero cycles, Lawson homology, singular homology, etc.) of $X$ to those of the fixed point set $X^{\C}$.
If $X$ is smooth projective, most of topological invariants are studied and  computed in details, but some of algebraic invariants are still hard to identified.
 Some of those invariants have been investigated even if $X$ is singular. In this section, we will identify some of these invariants including the Chow groups of zero cycles, Lawson homology
 for 1-cycles, singular homology with integer coefficients, etc.

\subsection{A-equivalence}

Let $A$ be a fixed  complex quasi-projective algebraic variety. Recall that  an algebraic scheme $X_1$ is \textbf{simply
$A$-equivalent} to an algebraic variety $X_2$ if $X_1$ is isomorphic to a closed subvariety $X_2'$ of
$X_2$ and there exists an isomorphism $f: X_2- X_2'\to Y \times A$, where $Y$ is an algebraic variety.
The smallest equivalence relation containing the relation of simple $A$-equivalence is called
the \textbf{A-equivalence} and we denote it by $\sim$ (see \cite{Bialynicki-Birula1}). A result of Bialynicki-Birula
says that $X{\sim} X^{\C}$ if $X$ is a quasi-projective variety admitting a $\C$-action.
A similar statement  holds for  $X$ admitting $\C^*$-action.
From this, Bialynicki-Birula showed that  $H^0(X,\Z)\cong H^0(X^{\C},\Z)$ and $H^1(X,\Z)\cong H^1(X^{\C},\Z)$ in the case that $X$ admits a $\C$-action, where
$\chi(X)=\chi(X^{\C^*})$ in the case that $X$ admits a $\C$-action (see \cite{Bialynicki-Birula1}). Along this route, more additive invariants has been
calculated for varieties admits a $\C$ or $\C^*$-action (see \cite{Hu}).

\subsection{Chow Groups and Lawson homology}

Let $X$ be any complex projective variety or scheme of dimension $n$ and let $\cZ_p(X)$  be the group of algebraic $p$-cycles on $X$.
 Let $\Ch_p(X)$ be the Chow group of $p$-cycles on $X$, i.e.
$\Ch_p(X)=\cZ_p(X)/{\rm \{rational ~equivalence\}}$.  Set $\Ch_p(X)_{\Q}:=\Ch_p(X)\otimes \Q$, $\Ch_p(X)=\bigoplus_{p\geq0} \Ch_p(X)$.
For more details on Chow theory, the reader is  referred to Fulton (\cite{Fulton}).

\begin{proposition}\cite{Hu0}\label{Prop2.2}
Let $X$ be a (possible singular) connected  complex projective variety. If $X$ admits a $\C$-action with isolated fixed points, then $\Ch_0(X)\cong \Z$.
\end{proposition}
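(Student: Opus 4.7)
The plan is to prove $\Ch_0(X)\cong\Z$ in two stages: first reduce every 0-cycle to one supported on the fixed-point set $X^{\C}$ via orbit closures, and then use the $A$-equivalence $X\sim X^{\C}$ together with connectedness to collapse all fixed-point classes into a single one.

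For the first stage, let $p\in X$ be any non-fixed point. Its isotropy subgroup in $(\C,+)$ is a proper Zariski-closed algebraic subgroup of $\C$, hence trivial, so the orbit map $\C\to X$, $t\mapsto t\cdot p$, is injective. Because $X$ is proper, the valuative criterion of properness uniquely extends this morphism to $\varphi_p\colon\P^1\to X$, and equivariance forces $\varphi_p(\infty)=q_p$ to be a fixed point of the $\C$-action. Pushing forward the rational equivalence $[0]\sim[\infty]$ on $\P^1$ along $\varphi_p$ then yields $[p]=[q_p]$ in $\Ch_0(X)$. Hence $\Ch_0(X)$ is generated by the finitely many classes $[q]$ with $q\in X^{\C}$, giving a surjection $\Z^{|X^{\C}|}\cong\Ch_0(X^{\C})\twoheadrightarrow\Ch_0(X)$.

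For the second stage, Bialynicki-Birula's structure theorem provides a $\C$-invariant filtration by closed subvarieties $X^{\C}=F_0\subsetneq F_1\subsetneq\cdots\subsetneq F_N=X$ whose successive pieces satisfy $F_i\setminus F_{i-1}\cong Y_i\times\C$ for some quasi-projective $Y_i$. By the $\A^1$-homotopy invariance of Chow groups, $\Ch_0(Y_i\times\A^1)\cong\Ch_{-1}(Y_i)=0$, so the localization exact sequence $\Ch_0(F_{i-1})\to\Ch_0(F_i)\to\Ch_0(F_i\setminus F_{i-1})\to 0$ produces surjections $\Ch_0(F_{i-1})\twoheadrightarrow\Ch_0(F_i)$ at each step, reconfirming the surjection of Stage~1.

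The main obstacle is then to analyze the kernel of this surjection using the connectedness of $X$: any two fixed points $q_i,q_j$ are joined by a chain of BB strata whose closures overlap, and within each such closure one exhibits (families of) orbit closures giving rational equivalences between the fixed points carried by adjacent strata. Chaining these yields $[q_i]=[q_j]$ for all $i,j$, so $\Ch_0(X)$ has a single generator and the degree map $\Ch_0(X)\to\Z$ (well-defined because $X$ is connected and projective) is an isomorphism. The technical heart of the proof is making this connectedness-to-rational-equivalence step rigorous in the singular setting, where the BB cells and the attaching maps into the filtration below may be quite complicated, and one must carefully identify the 1-cycles whose boundaries realize the required relations among fixed-point classes.
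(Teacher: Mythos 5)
Your Stage 1 is correct and is a genuinely more elementary argument than the route this paper (via \cite{Hu0} and, in the $\C^*$-case, Proposition \ref{Prop4.19}) takes: the orbit map of a non-fixed point extends to $\varphi_p\colon\P^1\to X$ by properness, $\varphi_p(\infty)$ is a fixed point by equivariance, and pushing forward $[0]\sim[\infty]$ gives $[p]=[q_p]$ in $\Ch_0(X)$, so $\Ch_0(X)$ is generated by fixed-point classes. The paper's method instead works through invariant open sets $U\cong U'\times\C$, homotopy invariance and localization for (higher) Chow groups, cycle class maps to Borel--Moore homology, the Five Lemma and noetherian induction, which yields the stronger statement $\Ch_0(X)\cong H_0(X,\Z)$ (and, per the remark following the proposition, $\Ch_0(X)\cong\Ch_0(X^{\C})$ in general).

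The genuine gap is your Stage 2. You reduce everything to showing that all fixed-point classes coincide, propose to do this by chaining rational equivalences across overlapping closures of Bia{\l}ynicki-Birula--type strata, and then explicitly concede that ``the technical heart'' of turning closure-overlaps into actual rational equivalences is left undone. As written this step is not a proof: a point in the closure of a stratum attached to $q_i$ is a limit of points rationally equivalent to $q_i$, but rational equivalence of $0$-cycles does not pass to such limits without exhibiting an explicit $1$-dimensional family (a curve in $X$, or a cycle on $X\times\P^1$) realizing the degeneration, and you never construct it. The missing idea that closes the gap cheaply is special to the additive group: for a $\C$-action the fixed-point set of a connected projective variety is connected (Horrocks \cite{Horrocks}; equivalently the isomorphism $H^0(X,\Z)\cong H^0(X^{\C},\Z)$ of Bia{\l}ynicki-Birula quoted in Section \ref{section2}). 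Hence isolated fixed points means exactly one fixed point, Stage 2 becomes vacuous, and $\Ch_0(X)$ is generated by that single class, with the degree map providing the isomorphism to $\Z$. Note this is precisely where the $\C$- and $\C^*$-cases diverge: for $\C^*$-actions the fixed-point set need not be connected, which is why Proposition \ref{Prop4.19} requires the localization/cycle-map machinery rather than any chaining of strata; your chaining strategy, if it could be made rigorous, would have to confront exactly that difficulty, whereas for the additive case it is simply unnecessary.
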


\begin{remark}
More generally, by using the same method, we can show that if $X$ admits a $\C$-action with  fixed points $X^{\C}$, then $\Ch_0(X)\cong \Ch_0(X^{\C})$.
\end{remark}

The \emph{Lawson homology}
$L_pH_k(X)$ of $p$-cycles for a projective variety is defined by
$$L_pH_k(X) := \pi_{k-2p}({\mathcal Z}_p(X)) \quad for\quad k\geq 2p\geq 0,$$
where ${\mathcal Z}_p(X)$ is provided with a natural topology (cf.
\cite{Friedlander1}, \cite{Lawson1}).

In \cite{Friedlander-Mazur}, Friedlander and Mazur showed that there
are  natural transformations, called \emph{Friedlander-Mazur cycle class maps}
\begin{equation}\label{eq01}
\Phi_{p,k}:L_pH_{k}(X)\rightarrow H_{k}(X,\Z)
\end{equation}
for all $k\geq 2p\geq 0$.

Set
{$$
\begin{array}{llcl}
&L_pH_{k}(X)_{hom}&:=&{\rm ker}\{\Phi_{p,k}:L_pH_{k}(X)\rightarrow
H_{k}(X)\};\\
&L_pH_{k}(X)_{\Q}&:=&L_pH_{k}(X)\otimes\Q.
\end{array}
 $$}

Denoted by $ \Phi_{p,k,\Q}$ the map $ \Phi_{p,k}\otimes{\Q}:L_pH_{k}(X)_{\Q}\rightarrow H_{k}(X,\Q) $.
The \emph{Griffiths group} of dimension $p$-cycles is defined to be
$$
{\rm Griff}_p(X):={\mathcal Z}_p(X)_{hom}/{\mathcal Z}_p(X)_{alg}.$$

Set
$$
\begin{array}{lcl}
{\rm Griff}_p(X)_{\Q}&:=&{\rm Griff}_p(X)\otimes\Q;\\
%{\rm Griff}^q(X)&:=&{\rm Griff}_{n-q}(X);\\
%{\rm Griff}^q(X)_{\Q}&:=&{\rm Griff}_{n-q}(X)_{\Q}.
\end{array}
$$

It was proved by Friedlander \cite{Friedlander1} that, for any
smooth projective variety $X$, $$L_pH_{2p}(X)\cong {\mathcal
Z}_p(X)/{\mathcal Z}_p(X)_{alg}.$$

Therefore
\begin{eqnarray*}
L_pH_{2p}(X)_{hom}\cong {\rm Griff}_p(X).
\end{eqnarray*}

\begin{proposition}\cite{Hu0}\label{Prop2.4}
Under the same assumption as Proposition \ref{Prop2.2},  we have
$$L_1H_k(X)\cong H_k(X,\Z)$$ for all $k\geq 2$. In particular, ${\rm Griff}_1(X)=0$.
\end{proposition}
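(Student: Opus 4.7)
The plan is to mirror the strategy used for $\Ch_0$ in Proposition \ref{Prop2.2}: run an induction along the $\C$-equivalence filtration of $X$, using the Friedlander-Mazur map $\Phi_{1,k}$ to bridge Lawson and singular homology. Concretely, I would unpack $X \sim X^{\C}$ as a chain of closed inclusions
$$X^{\C} = X_0 \subset X_1 \subset \cdots \subset X_m = X$$
with each complement $U_i := X_i \setminus X_{i-1}$ isomorphic to $Y_i \times \C$ for a quasi-projective $Y_i$. Each $X_i$ is projective, and the base $X_0$ is finite since the fixed points are isolated, so $L_1H_k(X_0) = 0 = H_k(X_0,\Z)$ for $k \ge 2$, and the conclusion holds trivially on $X_0$.

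For the inductive step I would set up the commuting ladder whose rows are the localization long exact sequences for Lawson homology of the pair $X_{i-1} \subset X_i$ (Lima-Filho, Friedlander-Gabber) and for singular homology, with vertical arrows given by the Friedlander-Mazur maps. Since each $X_i$ is projective, Borel-Moore agrees with ordinary singular homology on it, while excision yields $H_k(X_i,X_{i-1}) \cong H_k^{BM}(U_i)$. By the inductive hypothesis the vertical maps over $X_{i-1}$ are isomorphisms for $k \ge 2$, so the five-lemma delivers the conclusion on $X_i$ once one knows that $\Phi_{1,k} : L_1H_k(U_i) \to H_k^{BM}(U_i)$ is itself an isomorphism.

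The key local input is the $\C$-bundle comparison. I would combine homotopy invariance of Lawson homology, $L_1H_k(Y_i \times \C) \cong L_0H_{k-2}(Y_i)$, with the Dold-Thom / Lawson identification $L_0H_{k-2}(Y_i) \cong H_{k-2}^{BM}(Y_i,\Z)$, matched against homotopy invariance of Borel-Moore homology $H_k^{BM}(Y_i \times \C) \cong H_{k-2}^{BM}(Y_i,\Z)$. All three isomorphisms are compatible with $\Phi$, which closes the induction and yields $L_1H_k(X) \cong H_k(X,\Z)$ for $k \ge 2$. For the Griffiths consequence, injectivity of $\Phi_{1,2}$ forces $L_1H_2(X)_{\mathrm{hom}} = 0$; combined with the identification $L_1H_2(X) = \pi_0(\cZ_1(X)) = \cZ_1(X)/\cZ_1(X)_{alg}$, whose homologically trivial part is by definition $\mathrm{Griff}_1(X)$, this gives $\mathrm{Griff}_1(X) = 0$.

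The hardest part will be the bookkeeping at the low-degree boundary. At $k=2$, the Lawson sequence terminates with $L_1H_2(U_i) \to 0$, while the singular sequence continues $H_2^{BM}(U_i) \to H_1(X_{i-1}) \to H_1(X_i) \to H_1^{BM}(U_i) = 0$, and the five-lemma no longer applies verbatim. To patch this, I would run an auxiliary induction along the same filtration to show $H_1(X_i) = 0$ for all $i$: the base $H_1(X_0) = 0$ is immediate, and using $H_1^{BM}(U_i) = H_{-1}^{BM}(Y_i) = 0$ the pair sequence transmits the vanishing from $X_{i-1}$ to $X_i$. Once $H_1(X_{i-1}) = 0$, the connecting map $H_2^{BM}(U_i) \to H_1(X_{i-1})$ is zero, so $H_2(X_i) \twoheadrightarrow H_2^{BM}(U_i)$ matches the Lawson side, and the comparison closes at $k=2$ as well. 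Beyond this, the remaining care is standard functoriality of $\Phi_{p,k}$ with respect to both localization sequences.
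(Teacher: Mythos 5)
Your proposal is correct and is essentially the argument this paper leans on (the result is quoted from \cite{Hu0}, and the same scheme appears in the proof of Proposition \ref{Prop4.19} and in the author's ladder diagram for the $\C^*$-case): induct along the Bialynicki-Birula chain whose open strata are $Y_i\times\C$, compare the Lawson and Borel--Moore localization sequences through the Friedlander--Mazur maps, identify the maps on the strata via homotopy invariance $L_1H_k(Y_i\times\C)\cong L_0H_{k-2}(Y_i)$ and Dold--Thom, and close with the five lemma, after which $L_1H_2(X)\cong \mathcal{Z}_1(X)/\mathcal{Z}_1(X)_{alg}$ gives ${\rm Griff}_1(X)=0$. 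Your extra step establishing $H_1(X_{i-1})=0$ to handle the $k=2$ boundary is sound (though a four-lemma chase, using that $L_1H_2(X_i)\to L_1H_2(U_i)$ is onto, already makes it unnecessary), so nothing essential is missing.
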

\begin{remark}
The isomorphism $L_0H_k(X)\cong H_k(X,\Z)$ holds for any integer $k\geq 0$, which is the special case of the Dold-Thom Theorem.
\end{remark}

\begin{remark}
The assumption of ``connectedness" in Proposition \ref{Prop2.4} is not necessary. By the same reason, we can remove the connectedness in Proposition \ref{Prop2.2}, while the conclusion
``$\Ch_0(X)\cong \Z$" would
be replaced by ··$\Ch_0(X)\cong H_0(X,\Z)$”.
\end{remark}

\subsection{The virtual Betti  and Hodge numbers }
 Recall that
 the \emph{virtual Hodge polynomial} $H:Var_{\C}\to \Z[u,v]$ is defined by  the following properties:
\begin{enumerate}
 \item $H_X(u,v):=\sum_{p,q}(-1)^{p+q}\dim H^{q}(X,\Omega_X^p)u^pv^q$ if $X$ is nonsingular and  projective (or complete).
\item  $H_X(u,v)=H_U(u,v)+H_Y(u,v)$ if $Y$ is a closed algebraic subset of $X$ and $U=X-Y$.
\item  If $X=Y\times Z$, then $H_X(u,v)=H_Y(u,v)\cdot H_Z(u,v)$.
\end{enumerate}

The existence and uniqueness of such a polynomial follow from Deligne's Mixed Hodge theory (see \cite{Deligne1,Deligne2}).
The coefficient of $u^pv^q$ of $H_X(u,v)$ is called the \emph{virtual Hodge $(p,q)$-number} of $X$ and we denote it by $\tilde{h}^{p,q}(X)$.
Note that from the definition, $\tilde{h}^{p,q}(X)$ coincides with the usual Hodge number $(p,q)$-number ${h}^{p,q}(X)$ if $X$ is a smooth
projective variety. The sum $\tilde{\beta}^k(X):=\sum_{i+j=k}\tilde{h}^{p,q}(X)$ is called the $k$-th \emph{virtual Betti number} of $X$.
The \emph{ virtual Poincar\'{e} polynomial} of $X$ is defined to be
$$\widetilde{P}_X(t):=\sum_{k=0}^{2\dim_{\C} X} \beta^k(X)t^k,$$
which coincides to the usual Poincar\'{e} polynomial defined through the corresponding usual Betti numbers.

\section{Results related to the multiplicative group action}
\label{section4}
%%%%%%%%%%%%%%%%%%%%%%%%%%%%%%%%%%%%%%%%%%%%%%%%%%%%%%%%%%%%%%%%%%%%%%%%%%%%%%%%%%%%%%%%%%%%%%%%
In this section  we will give all kinds of  relations between a complex variety (not necessarily smooth, irreducible) and
the fixed point set of  a multiplicative group action or an additive group action.

Let $X$ be a smooth complex projective variety which admits a $\C^*$-action with fixed point set $X^{\C^*}$. Denote by
 $F_1,\cdots,F_r$ the connected components. It was shown by Bialynicki-Birula that there is a homology basis formula (\cite{Bialynicki-Birula2}):
\begin{equation}\label{equation1}
H_k(X,\Z)\cong \bigoplus_{j=1}^r H_{k-2\lambda_j}(F_j,\Z),
\end{equation}
where $\lambda_j$ is the fiber dimension of the bundle  in $P_j:X_j^+\to F_j$ and $X_j^+:=\{x\in X:\lim_{t\to 0}{t\cdot x\in F_j}\}$.
This result has been generalized to compact K\"{a}hler manifolds  without change by Carrell-Sommese \cite{Carrell-Sommese2} and Fujiki independently.
In fact, when $X$ is a compact K\"{a}hler manifold,  the Hodge structure on $X$ is completely determined by those on the fixed point
set in an obvious way.

Furthermore, there are similar basis formulas for Chow groups (see \cite{Chow} for $X^{\C^*}$ finite and  \cite{Karpenko} for the general case) and Lawson homology (see
see \cite{Lima-Filho} for $X^{\C^*}$ finite and\cite{Hu-Li} for the general case), as  applications of
Bialynicki-Birula' structure theorem (\cite{Bialynicki-Birula2}).

However, if $X$ is a singular projective algebraic variety, Equation (\ref{equation1}) would be failed in general.
Under some additional condition, Equation (\ref{equation1}) may still hold.
For example,
if the $\C^*$-action on $X$ is ``good" in the sense of Carrell and Goresky,
Equation (\ref{equation1}) has been shown to hold (cf. \cite{Carrell-Goresky}).

There are several questions related to the structure of $X$ and $X^{\C^*}$.
J. Carrell asked the question  how does the mixed Hodge structure on $X$ relate to the mixed Hodge structure on the fixed point set in the case of
good action.
\begin{question}(\cite[p.21]{Carrell}) \label{Ques4.0}
 In the case of a good action, how does the mixed Hodge structure on $X$ relate to the mixed Hodge structure on $X^{\C^*}$ ?
\end{question}

We will give an explicit relation on the mixed Hodge structure between $X$ and $X^{\C^*}$, especially the relation of their
virtual Hodge numbers (see Proposition \ref{Prop4.9}).

When $X$ is a possibly singular complex projective variety  with a $\C^*$-action, where a ``variety" means a reduced, not necessary irreducible scheme,
Carrell and Goresky showed that there still exists an integral homology basis formula under the assumption that the $\C^*$-action is ``good" (\cite{Carrell-Goresky}).

 Carrell asked the following question.

\begin{question}(\cite[p.22]{Carrell})\label{Ques4.1}
If an  irreducible complex projective variety $X$ admits not necessarily good  $\C^*$-action with isolated fixed points, do the odd homology groups of $X$ vanish?
\end{question}

The following example gives a negative answer to his question.

\begin{example}\label{exam4.2}
\emph{
Let $C$ be a cubic plane curve with a node singular point $p$, e.g. $(zy^2=x^3+x^2z)\subset \P^2$. The normalization $\sigma:\tilde{C}\to C$
of $C$ is isomorphic to $\P^1$. Let $\C^*\times \P^1\to \P^1$ be the holomorphic $\C^*$-action given by $(t, [x:y])\mapsto [t x:y]$.
The fixed point set of this action contains  two points, $[1:0]$ and $[0:1]$.  We can always assume  $\sigma([1,0])=\sigma([0:1])=p_0$ by composing
a suitable automorphism of $\P^1$, where $p_0=[0:0:1]$ is the singular point of $C$.
The holomorphic $\C^*$-action on $\P^1$ descends to a holomorphic $\C^*$-action on $C$ whose fixed point set is the single point $p$.  More explicitly, such a map $\sigma$ can be given by the formula:
$\sigma:\P^1\to C$, $[s:t]\mapsto [st(s+t):st(s-t):(s+t)^3]$.}

\emph{However, the fundamental group of $C$ is isomorphic to $\Z$, so $H_1(C,\Z)\cong \Z$ and $\beta_1(C)=1\neq0$.}
\end{example}

In each dimension $n\geq 1$, there exists a projective variety $X$ satisfying the assumption in Question \ref{Ques4.1} such
that $\beta_1(X)\neq 0$. To see this, note that $\P^{n-1}$ admits a $\C^*$-action with isolated fixed points for each integer $n\geq 1$. Hence
$X:=C\times \P^{n-1}$  admits an induced $\C^*$-action from each component with isolated fixed points.
Therefore, we get   $\beta_1(X)=\beta_1(C)$  by the K\"{u}nneth formula and the later is nonzero from Example \ref{exam4.2}.

In Example \ref{exam4.2}, $X$ admits a $\C^*$-action but the odd homology group $H_1(X,\Z)$ is nonzero. However, the odd
\emph{virtual Betti numbers}  and the \emph{virtual Hodge numbers}  $\tilde{h}^{p,q}(X)$ are zero, where $p\neq q$.
To see this, we can write $C=\C^*\cap p_0$ and so  $H_{C}(u,v)=(uv-1)+1=uv$. Hence $\tilde{{h}}^{1,0}(C)=\tilde{{h}}^{0,1}(C)=0$ and $\tilde{\beta}^1(C)=0$.

In certain sense, the virtual Betti numbers are more suitable to reveal the topology of a singular variety.
A natural question would be the following modified version of Carrell's Question in virtual Betti numbers.

\begin{question}[Carrell]\label{Ques4.2}
If an  irreducible complex  projective variety $X$ admits not necessarily good  $\C^*$-action with isolated fixed points, do the odd virtual Betti numbers of $X$ vanish?
\end{question}

If $X$ is irreducible and $\dim X=1$, the answer to the question is positive.
 In this case, $X=\C^*\cup Y$ and $Y$ is a set of finite points.
Then  $H_{X}(u,v)=(uv-1)+k=uv+k-1$ and the odd  virtual Betti numbers of $X$ are zero,
where $k$ is the number of points of $Y$.

If $X$ is smooth projective, then the answer to the question is positive (\cite{Bialynicki-Birula2}).  Moreover, if the $\C^*$-action on $X$ is
``good" in the sense of Carrell and Goresky, the answer is also positive (see Corollary \ref{Cor4.13} for a weaker condition such that the answer is positive).

The following example of a projective variety admits a  not ``good" $\C^*$-action, but the answer to Question \ref{Ques4.2} is positive.
\begin{example}
Let $X:=\sp^d(\P^n)$ be the $d$-th symmetric product of the complex projective space $\P^n$.
 The standard $(\C^*)^n$-action on $\P^n$ induces a
$(\C^*)^n$-action on $\sp^d(\P^n)$ with isolated fixed points.
 It follows from Cheah \cite{Cheah} that the $k$-th virtual Betti number of $\sp^d(\P^n)$  is the coefficient of  $t^dx^k$ in the
 power series of $\prod_{j=0}^n(1-tx^{2j})^{-1}$.  Hence $\tilde{\beta}_k(\sp^d(\P^n))=0$ for and all $d$ and all odd $k$.
\end{example}
Under a weaker condition than Carrell and Goresky's ``good" condition, the answer to Question \ref{Ques4.2} is positive (see Corollary \ref{Cor4.13}).

However, in general, the answer to Question \ref{Ques4.2} is negative.
There is an irreducible projective algebraic surface $S$ admitting $\C^*$-action with isolated zeroes such that  the first virtual betti number $\tilde{\beta}_1(S)\neq 0$.
Such a surface was constructed by Lieberman (\cite[p.111]{Lieberman2}) as a nonrational surface admitting a holomorphic vector field with isolated zeroes. A suitable modification
fulfills our purpose.
The following example gives a negative answer to Question \ref{Ques4.2}.

\begin{example}\label{exam4.7}
 Let $Y=\P^1\times C$, where $C$ is a smooth projective curve with genus $g(C)\geq 1$. Let us consider the $\C^*$-action $\phi:\C^*\times Y\to Y$  given by
$(t, ([u:v], z))\to ([u:tv],z)$, where $[u:v]$ denotes the homogeneous coordinates for $\P^1$ and $z$ denotes the coordinate for the curve $C$. The fixed point of the
action $\phi$ are $C_1:=[1:0]\times C$ and $C_2:=[0:1]\times C$. These curves has self-intersection zero. Let $ {\sigma}:\widetilde{S}\to Y$ be  obtained from $Y$ by blowing up one point $p_i$
on each $C_i$ ($i=1,2$), and let $\tilde{\phi}:\C\times \widetilde{S}\to \widetilde{S}$ be the equivariant lifting action. The fixed point of $\tilde{\phi}$ are the proper
transforms $\widetilde{C}_i$ of $C_i$ and two other isolated points. Since the self-intersection number of $\widetilde{C}_i$ on $\widetilde{S}$ is $-1$. One can blow down $\tilde{\sigma}:\widetilde{S}\to S$ the
$\widetilde{C}_i$ to obtain a projective surface $S$, which admitting the induced $\C^*$-action. Moreover $S^{\C^*}$ are four isolated points.  In explicitly,
 we have  the following relations
 \begin{equation}\label{equ4.8}
 \xymatrix{ \widetilde{S}\ar[r]^{\tilde{\sigma}}\ar[d]^{\sigma}& S\\
 Y\ar@{=}[r]& \P^1\times C.
 }
 \end{equation}

Now we can compute the virtual Betti numbers from the construction. Since $\widetilde{S}-\widetilde{C}_1-\widetilde{C}_1\cong S-\tilde{\sigma}(\widetilde{C}_1)-\tilde{\sigma}(\widetilde{C}_2)$
and $\widetilde{S}-\P^1-\P^1\cong Y-p_1-p_2$, we have by using the additive property of the virtual Poincar\'{e} polynomial
$$
\begin{array}{ccl}
\widetilde{P}_S(t)&=&\widetilde{P}_{\widetilde{S}}(t)-\widetilde{P}_{\widetilde{C}_1}(t)-\widetilde{P}_{\widetilde{C}_2}(t)+
    \widetilde{P}_{\tilde{\sigma}(\widetilde{C}_1)}(t)+\widetilde{P}_{\tilde{\sigma}(\widetilde{C}_2)}(t)\\
&=& \widetilde{P}_{\widetilde{S}}(t)-2\widetilde{P}_C(t)+2\\
&=& \widetilde{P}_{Y}(t)+2\widetilde{P}_{\P^1}(t)-2 -2\widetilde{P}_C(t)+2\\
&=& \widetilde{P}_{\P^1\times C}(t)+2\widetilde{P}_{\P^1}(t)-2 -2\widetilde{P}_C(t)+2\\
&=& \widetilde{P}_{\P^1}(t)\widetilde{P}_C(t)+2\widetilde{P}_{\P^1}(t)-2 -2\widetilde{P}_C(t)+2\\
&=& (t^2+1)(t^2+2g(C)t+1)+2(t^2+1)-2(t^2+2g(C)t+1)\\
&=& t^4+2g(C)t^3+2t^2-2g(C)t+1.
\end{array}
$$
Since $g(C)\geq 1$, $\tilde{\beta}_1(S)=-2g(C)\neq 0$.
\end{example}

\begin{remark}
We can also  construct examples of projective varieties in any dimension greater than or equals to 2 such that the answer to Question \ref{Ques4.2} is negative.
Since $\P^n$ admits a $\C^*$-action with isolated fixed points, so $S\times \P^n$ admits a  $\C^*$-action with isolated points, where $S$ is the projective surface constructed
in Example \ref{exam4.7}. By using
the product property of the virtual Poincar\'{e} polynomial, it is easy to
compute that $\tilde{\beta}_1(S\times\P^n)=-2g(C)$.
\end{remark}

Now we shall show that  the answer to   Question \ref{Ques4.2} is positive under certain not ``good" condition.
For a singular variety $X$ with a $\C^*$-action, one can always find an analytic Whitney stratification  whose strata are $\C^*$-invariant.
Recall that  the $\C^*$-action on $X$ is \emph{singularity preserving} as $t\to 0$ if there exists an equivariant Whiteny stratification of $X$ such that
for every stratum $A$, and for every $x\in A$, the limit $x_0=\lim_{t\to0}t\cdot x$ is also in $A$ (cf. \cite{Carrell-Goresky}). In this case,
$X=\bigcup_{j=1}^r X^+_j$, and $X^+_j\to F_j$ is a topologically locally trivial affine space bundle (cf. \cite[Lemma 1]{Carrell-Goresky}).
Denote $m_j$ be the dimension of the fiber of the bundle $X^+_j\to F_j$.

Then we have the following relation on virtual Hodge polynomials between $X$ and the fixed point set.

\begin{proposition}\label{Prop4.9}
Suppose $X$ admits a Whitney stratification which is singularity preserving as $t\to 0$. Then
$$
H_X(u,v)=\sum_{j=1}^r H_{F_j}(u,v)u^{m_j}v^{m_j},
$$
where  $F_j$ and $m_j$ are given as before.
\end{proposition}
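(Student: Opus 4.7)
The plan is to combine the additive and multiplicative properties of the virtual Hodge polynomial $H$ with a stratum-by-stratum application of the smooth Bialynicki-Birula theorem. The singularity-preserving hypothesis supplies a $\C^*$-invariant Whitney stratification $X=\bigsqcup_\alpha A_\alpha$ and, via Carrell-Goresky, the decomposition $X=\bigsqcup_{j=1}^r X_j^+$, with the two decompositions compatible in the sense that on each smooth quasi-projective stratum $A_\alpha$ the restricted $\C^*$-action has fixed components $A_\alpha \cap F_j$ and the intersection $A_\alpha \cap X_j^+$ is the Bialynicki-Birula $+$-cell of $A_\alpha$ attached to $A_\alpha \cap F_j$.

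Once this compatibility is in hand, the computation is routine. Iterating property (2) across both decompositions gives
\[
H_X(u,v)=\sum_{j=1}^r \sum_\alpha H_{A_\alpha \cap X_j^+}(u,v).
\]
For each pair $(\alpha,j)$ with $A_\alpha \cap F_j$ nonempty, the smooth Bialynicki-Birula theorem applied to $A_\alpha$ exhibits $A_\alpha \cap X_j^+ \to A_\alpha \cap F_j$ as a Zariski locally trivial $\A^{m_j}$-bundle. Covering $A_\alpha \cap F_j$ by a trivializing Zariski open, and combining additivity with the product property (3), one obtains
\[
H_{A_\alpha \cap X_j^+}(u,v)=(uv)^{m_j}\,H_{A_\alpha \cap F_j}(u,v).
\]
Summing over $\alpha$ via additivity on the stratification $F_j=\bigsqcup_\alpha (A_\alpha \cap F_j)$ recovers $H_{X_j^+}(u,v)=(uv)^{m_j}H_{F_j}(u,v)$, and summing over $j$ yields the claimed identity.

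The main obstacle I expect is the verification of the compatibility claim and of the constancy of the fiber dimension $m_j$ across strata meeting $F_j$. The first point follows from the singularity-preserving hypothesis itself: since $\lim_{t\to 0}t\cdot x$ stays in the same stratum as $x$, the global Bialynicki-Birula flow restricts on $A_\alpha$ to the intrinsic flow of the smooth stratum, so that $X_j^+\cap A_\alpha$ is exactly the corresponding smooth $+$-cell. The second point is forced by $\C^*$-equivariance: the fiber dimension is determined by the number of positive weights of the $\C^*$-action on the normal bundle to $F_j$ inside each stratum, which is locally constant along the connected component $F_j$, and therefore agrees with the global $m_j$ recorded after Lemma 1 of Carrell-Goresky. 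No further algebraic-geometric input is needed beyond what is already cited in the excerpt.
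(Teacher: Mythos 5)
Your argument is correct and is essentially the proof the paper gives: both decompose $X$ into the cells $X_j^+$, refine by the $\C^*$-invariant strata so that each piece $p_j^{-1}(F_j\cap A)\to F_j\cap A$ is a Zariski locally trivial affine space bundle by the smooth Bialynicki-Birula/Carrell--Sommese result applied on the stratum (the singularity-preserving hypothesis guaranteeing the flow stays in the stratum), and then sum using additivity and multiplicativity of $H_{(\cdot)}(u,v)$. The constancy of the fiber dimension $m_j$ over all of $F_j$, which you flag, is handled in the paper exactly as you suggest, by appeal to Lemma 1 of Carrell--Goresky.
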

\begin{proof}
Suppose  $X$ has a Whitney stratification that is singularity preserving as $t\to 0$ and let $F_j$ denote a fixed point component.
For a stratum $A$, the map $F_j\cap A$,  $p_j^{-1}(F_j\cap A):=\{x\in X:\lim_{t\to 0}(t\cdot x)\in F_j\cap A\}$ is
 Zariski locally trivial affine space bundle (cf. \cite{Bialynicki-Birula2}, \cite{Carrell-Sommese1}).
Since $F_j=\cup_{A\in \mathbb{S}} (F_j\cap A)$, where $\mathbb{S}$ is the set of all strata of $X$ in the given Whitney stratification.
Hence the total space of the topological locally trivial affine space bundle $X_j^+\to F_j$ can be written the disjoint union of subvarieties
$p_j^{-1}(F_j\cap A)$.

  Therefore, we have
 $$
 \begin{array}{ccl}
 H_{X}(u,v)&=&\sum_{j=1}^rH_{X_j^+}(u,v)\\
 &=&\sum_{j=1}^r \sum_{A\in \mathbb{S}} H_{p_j^{-1}(F_j\cap A)}(u,v)\\
 &=&\sum_{j=1}^r \sum_{A\in \mathbb{S}} H_{F_j\cap A}(u,v)\cdot H_{\C^{m_j}}(u,v)\\
&=&\sum_{j=1}^rH_{F_j}(u,v)\cdot H_{\C^{m_j}}(u,v)\\
&=&\sum_{j=1}^rH_{F_j}(u,v)(uv)^{m_j}.
 \end{array}
 $$
\end{proof}

\begin{remark}
Proposition \ref{Prop4.9} does not have to hold if the  singularity preserving property fails. For example, $X$ is the cone in $\P^{n+1}$ over a smooth
projective variety $V\subset \P^n=(z_{n+1}=0)$ with vertex $\P^0=[0:\cdots:0:1]$, the $\C^*$-action on $X$ induced by the action $(t,[z_0:\cdots:z_n:z_{n+1}])\mapsto [tz_0:\cdots:tz_n:z_{n+1}]$ on $\P^{n+1}$. The fixed point set is $V$ and $\P^0$, and
the action is not singularity preserving as $t\to 0$. In this case we observe that $H_X(u,v)\neq H_V(u,v)+h_{\P^0}(u,v)u^nv^n$.
However, if the action is given as $(t,[z_0:\cdots:z_n:z_{n+1}])\mapsto [z_0:\cdots:z_n:tz_{n+1}]$ on $\P^{n+1}$, it is
singularity preserving as $t\to 0$. So $H_X(u,v)=H_V(u,v)uv+H_{\P^0}(u,v)=H_V(u,v)uv+1$.
\end{remark}

From the proof of the above theorem, we see that if $X$ can be decomposed as the disjoint union of locally closed subvarieties
 (not necessarily irreducible) $W_j$ for $j=1,\cdots, r$, where $W_i$ is a locally trivial affine space bundle over $F_j$ with fiber $\C^{m_j}$ in
 Zariski topology, then $H_X(u,v)=\sum_{j=1}^r H_{F_j}(u,v)u^{m_j}v^{m_j}$.

From Proposition \ref{Prop4.9}, we see that the mixed Hodge structure of $X$ is partial determined by the mixed Hodge structures of the fixed point set.
One also obtains from Proposition \ref{Prop4.9} that  the  virtual Hodge numbers  of $X$ is nonnegative if all $F_j$ are smooth projective varieties.

\begin{corollary}
Suppose $X$ admits a Whitney stratification which is singularity preserving as $t\to 0$. Then
$$
\tilde{h}^{p,q}(X)=0, \quad \forall |p-q|>\dim X^{\C^*}.
$$
In particular, if $ X^{\C^*}$ contains only isolated points, then $\tilde{h}^{p,q}(X)=0$ for all $p\neq q$.
\end{corollary}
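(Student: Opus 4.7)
The plan is to read off the corollary from Proposition \ref{Prop4.9} by extracting coefficients and then invoking a standard vanishing property of virtual Hodge numbers. Concretely, Proposition \ref{Prop4.9} gives
$$
H_X(u,v)=\sum_{j=1}^r H_{F_j}(u,v)\,(uv)^{m_j},
$$
and comparing the coefficient of $u^pv^q$ on both sides yields
$$
\tilde h^{p,q}(X)=\sum_{j=1}^r \tilde h^{p-m_j,\,q-m_j}(F_j).
$$
Thus everything reduces to controlling the support of the bivariate polynomial $H_{F_j}(u,v)$.

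The key auxiliary fact I would invoke is that for \emph{any} complex algebraic variety $Y$ one has $\tilde h^{r,s}(Y)=0$ whenever $r$ or $s$ lies outside $[0,\dim Y]$, and in particular whenever $|r-s|>\dim Y$. For smooth projective $Y$ this is the classical vanishing of the Hodge diamond outside the square of side $\dim Y$. For general $Y$ it follows by induction on $\dim Y$: take a resolution $\widetilde Y\to Y$, let $S\subset Y$ be the singular locus and $E\subset \widetilde Y$ its preimage, apply the additive property to the decomposition $H_Y=H_{\widetilde Y}-H_E+H_S$, and use that each of $\widetilde Y$, $E$, $S$ has dimension at most $\dim Y$.

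Granted this, I substitute $r=p-m_j$ and $s=q-m_j$ in each summand, noting that $|r-s|=|p-q|$ is independent of $j$ and that $F_j\subseteq X^{\C^*}$ gives $\dim F_j\le \dim X^{\C^*}$. Consequently, if $|p-q|>\dim X^{\C^*}$ then $|r-s|>\dim F_j$ for every $j$, so $\tilde h^{p-m_j,\,q-m_j}(F_j)=0$ for all $j$, and therefore $\tilde h^{p,q}(X)=0$. The ``in particular'' statement is the specialization $\dim X^{\C^*}=0$, where $|p-q|>0$ is the same as $p\ne q$.

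The one step that is not purely formal is the support lemma for virtual Hodge polynomials of singular varieties; the rest is bookkeeping on the identity of Proposition \ref{Prop4.9}. That lemma is standard in Deligne--Danilov--Khovanskii theory, so the only real decision is whether to prove it in a short paragraph via resolution and additivity or simply to cite it.
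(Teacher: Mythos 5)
Your argument is correct and is essentially the paper's own (implicit) derivation: the corollary is read off from Proposition \ref{Prop4.9} by comparing coefficients of $u^pv^q$ and using the standard support property that $\tilde h^{r,s}(Y)=0$ when $r$ or $s$ falls outside $[0,\dim Y]$, applied to each $F_j\subseteq X^{\C^*}$. Your sketch of that support lemma via resolution, compactification and additivity is the standard one, so no gap remains.
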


One obtains the relations between virtual Betti numbers of $X$ and those of the fixed point set immediately from Proposition \ref{Prop4.9}.
\begin{corollary}\label{Cor4.10}
Suppose $X$ admits a Whitney stratification which is singularity preserving as $t\to 0$. Then
\begin{equation}\label{equation2}
\tilde{P}_X(t)=\sum_{j=1}^r \tilde{P}_{F_j}(t)t^{2m_j}.
\end{equation}
\end{corollary}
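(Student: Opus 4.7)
The plan is to deduce Corollary \ref{Cor4.10} by specializing the formula of Proposition \ref{Prop4.9}. The central observation is that the virtual Poincar\'e polynomial can be recovered from the virtual Hodge polynomial through the substitution $u=v=-t$. Indeed, for any smooth projective variety $X$,
$$
H_X(-t,-t)=\sum_{p,q}(-1)^{p+q}h^{p,q}(X)(-t)^{p+q}=\sum_{p,q}h^{p,q}(X)t^{p+q}=\widetilde{P}_X(t),
$$
since the sign $(-1)^{p+q}$ from the definition of $H_X$ is cancelled by $(-t)^{p+q}=(-1)^{p+q}t^{p+q}$. This identity then propagates to all complex algebraic varieties, because both sides are additive under decompositions into locally closed subsets: for $H_X$ this is one of its defining properties, and for $\widetilde{P}_X$ it follows formally by running the same substitution argument stratum by stratum.

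With this specialization in hand, the corollary is an immediate consequence. Substituting $u=v=-t$ into Proposition \ref{Prop4.9} and using $(-t)^{m_j}(-t)^{m_j}=t^{2m_j}$, I would write
$$
\widetilde{P}_X(t)=H_X(-t,-t)=\sum_{j=1}^{r}H_{F_j}(-t,-t)\,t^{2m_j}=\sum_{j=1}^{r}\widetilde{P}_{F_j}(t)\,t^{2m_j},
$$
which is the claimed formula. The only point requiring attention is the sign convention baked into the definition of $H_X$, which forces the substitution $u=v=-t$ rather than $u=v=t$; beyond this bookkeeping there is no real obstacle, since all the geometric content has already been packaged into Proposition \ref{Prop4.9}.
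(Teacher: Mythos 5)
Your proof is correct and takes essentially the same route as the paper: there Corollary \ref{Cor4.10} is stated as an immediate consequence of Proposition \ref{Prop4.9}, obtained by passing from the virtual Hodge polynomial to the virtual Poincar\'e polynomial (summing coefficients along $p+q=k$), which is exactly what your specialization $u=v=-t$ accomplishes, converting $u^{m_j}v^{m_j}$ into $t^{2m_j}$. Your care about the sign merely reflects the paper's own convention ambiguity (the factor $(-1)^{p+q}$ in the definition of $H_X$ versus the claim that $\tilde{h}^{p,q}$ agrees with $h^{p,q}$ for smooth projective varieties) and does not affect the argument.
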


If the $\C^*$-action on a projective variety $X$ is ``good" in the sense of Carrell and Goresky (cf. \cite{Carrell-Goresky}), then the
usual Poincar\'{e} polynomial $P_X(t)$ of $X$ can be expressed in terms of that of the fixed point set  as follows:
\begin{equation}\label{equation3}
{P}_X(t)=\sum_{j=1}^r {P}_{F_j}(t)t^{2m_j}.
\end{equation}
Furthermore, if $F_j$ are smooth projective varieties, then $\tilde{P}_X(t)={P}_X(t)$ since $\tilde{P}_{F_j}(t)={P}_{F_j}(t)$ for each $F_j$ and
Equation (\ref{equation2})-(\ref{equation3}). In other words, the virtual Betti numbers and the usual Betti numbers coincide for such projective
 varieties.  This gives us the following corollary.

Since the answer to Question \ref{Ques4.2} is  negative in general, the following corollary gives a sufficient condition for the $\C^*$-action such that  the odd virtual Betti numbers
vanish. This condition is much weaker than   Carrell and Goresky's   ``good" condition.

\begin{corollary}\label{Cor4.13} Under the assumption in Question \ref{Ques4.2} and suppose $X$ admits a Whitney stratification which is singularity preserving as $t\to 0$. Then
$$
\tilde{\beta}_{2k-1}(X)=0, \quad \forall k>\dim X^{\C^*}.
$$
In particular, if $ X^{\C^*}$ contains only isolated points, then $\tilde{\beta}_{k}(X)=0$ for all odd $k$.
\end{corollary}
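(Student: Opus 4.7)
My plan is to obtain the corollary by extracting the coefficient of $t^{2k-1}$ from the virtual Poincar\'e polynomial identity of Corollary \ref{Cor4.10}. Under the singularity-preserving Whitney stratification hypothesis, that result gives
$$\widetilde{P}_X(t)\;=\;\sum_{j=1}^r\widetilde{P}_{F_j}(t)\, t^{2m_j}.$$
Since $t^{2m_j}$ is supported in even degree, equating the coefficient of $t^{2k-1}$ on both sides yields
$$\widetilde{\beta}_{2k-1}(X)\;=\;\sum_{j=1}^r\widetilde{\beta}_{2(k-m_j)-1}(F_j),$$
with the convention $\widetilde{\beta}_i=0$ for $i<0$. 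So the proof reduces to showing that every summand on the right vanishes whenever $k>\dim X^{\C^*}$, without assuming anything about the dimensions of the individual $F_j$.

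I would next install the necessary dimensional control. Each $F_j$ is a closed subvariety of $X^{\C^*}$, so $\dim F_j \le \dim X^{\C^*}$; and by the additivity of $\widetilde{P}$ over a Whitney stratification of $F_j$ into smooth locally closed pieces, combined with the bound $\deg \widetilde{P}_V \le 2\dim V$ for smooth quasi-projective $V$, we obtain $\widetilde{\beta}_i(F_j)=0$ whenever $i>2\dim F_j$, and hence whenever $i>2\dim X^{\C^*}$. Two of the three ranges of $k$ are then disposed of immediately: if $k \le m_j$ the index $2(k-m_j)-1$ is negative, and if $k-m_j > \dim F_j$ the index exceeds $2\dim F_j$, so in both cases the $j$-th summand is zero. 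In the special case that $X^{\C^*}$ is zero-dimensional every $F_j$ is a point, so $\widetilde{P}_{F_j}(t)=1$ and $\widetilde{P}_X(t)=\sum_j t^{2m_j}$ carries no odd-degree monomial at all, which recovers the ``in particular'' statement cleanly.

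The main obstacle is the intermediate window $m_j < k \le m_j+\dim F_j$ in the positive-dimensional case, where the crude bound above leaves the summand potentially nonzero even though $k>\dim X^{\C^*}$. To close this gap I would pass to the Hodge refinement from Proposition \ref{Prop4.9},
$$\widetilde{h}^{p,q}(X)=\sum_{j=1}^r \widetilde{h}^{p-m_j,\,q-m_j}(F_j),$$
and exploit that $\widetilde{h}^{a,b}(F_j)$ is supported in $0\le a,b\le \dim F_j$; shifting by $(m_j,m_j)$ forces any nonzero contribution to $\widetilde{h}^{p,q}(X)$ to satisfy $|p-q|\le \dim F_j \le \dim X^{\C^*}$. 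Rewriting $\widetilde{\beta}_{2k-1}(X)=\sum_{p+q=2k-1}\widetilde{h}^{p,q}(X)$ and combining this bidegree constraint with the parity $p+q\equiv 1\pmod 2$ (so $|p-q|$ is odd and at least $1$) and with the joint range constraint $m_j\le p,q \le m_j+\dim F_j$ inherited from the summands, the delicate step is to verify that no valid $(p,q,j)$ triple survives once $k>\dim X^{\C^*}$; this case-analysis is the heart of the argument.
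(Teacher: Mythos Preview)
Your treatment of the isolated fixed point case is exactly the paper's (implicit) argument: the corollary is stated without proof as an immediate consequence of Corollary~\ref{Cor4.10}, and once every $F_j$ is a point the identity $\widetilde{P}_X(t)=\sum_j t^{2m_j}$ kills all odd-degree coefficients at once.

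Your worry about the ``intermediate window'' in the positive-dimensional case is well founded, but the Hodge-number patch you propose cannot close it, because the displayed statement $\tilde\beta_{2k-1}(X)=0$ for all $k>\dim X^{\C^*}$ is simply \emph{false} without the isolated-fixed-point hypothesis. Take $X=\P^1\times E$ with $E$ an elliptic curve and the $\C^*$-action on the first factor: $X$ is smooth (hence trivially singularity preserving), the fixed locus is two copies of $E$ so $\dim X^{\C^*}=1$, yet $\tilde\beta_3(X)=\beta_3(X)=2\neq 0$, violating the claim at $k=2$. Your own bidegree analysis detects this: the contribution $\tilde h^{2,1}(X)=\tilde h^{1,0}(E)=1$ survives every constraint you wrote down. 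So the ``delicate step'' you flagged as the heart of the argument has no successful completion.

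The resolution is that the corollary's opening clause ``Under the assumption in Question~\ref{Ques4.2}'' literally imports the hypothesis of isolated fixed points; with that reading $\dim X^{\C^*}=0$, the first display collapses to the ``in particular'' clause, and your one-line argument for the isolated case is already a complete proof.
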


  For a $\C^*$-action on algebraic varieties,  there is  a relation between
 virtual Hodge numbers  between $X$ and  $X^{\C^*}$ (see \cite{Hu}), i.e.,
\begin{equation}\label{equation4}
 \sum_{p-q=i} \tilde{h}^{p,q}(X)=\sum_{p-q=i} \tilde{h}^{p,q}(X^{\C^*}), \forall i.
\end{equation}

 If we set $\tilde{b}_{even}(X):=\sum_{i}\tilde{b}_{2i}(X)$ and $\tilde{b}_{odd}(X):=\sum_{i}\tilde{b}_{2i-1}(X)$, then
we get from equation \eqref{equation4}
\begin{equation}\label{equation4.15}
\begin{array}{ccl}
\tilde{b}_{even}(X)&=&\tilde{b}_{even}(X^{\C^*})\\
 \tilde{b}_{odd}(X)&=&\tilde{b}_{odd}(X^{\C^*}).
\end{array}
\end{equation}
 In particular, $X$ admits a $\C^*$-action with isolated zeroes, then $ \tilde{b}_{odd}(X)=0$, i.e., the sum of  all odd virtual
  Betti numbers  is zero.

  Note that the Euler characteristic $\chi(X)$ of $X$ is equal to $\tilde{b}_{even}(X)-\tilde{b}_{odd}(X)$
  and Equation \eqref{equation4.15} implies the fixed point formula for the Euler characteristic: $\chi(X)=\chi(X^{\C^*})$.

When $X$ admits $\C$-action with isolated fixed point, it was shown that $\Ch_0(X)\cong\Z$ (see Proposition \ref{Prop2.2}). Inspired by this result,
it is natural to ask if  $\Ch_0(X)\cong\Z$ holds for a $\C^*$-action. Amazingly, such a statement still holds.

\begin{proposition}\label{Prop4.19}
If $X$ is a connected projective variety admitting a $\C^*$-action with isolated fixed point, then we have $\Ch_0(X)\cong\Z$.
\end{proposition}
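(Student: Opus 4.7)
My plan mirrors the proof of Proposition~\ref{Prop2.2} for the additive group, though an extra step is needed because $X^{\C^*}$ can be disconnected even when $X$ is (e.g.\ $\P^1$ with the standard action). First, I would show that every non-fixed $x\in X$ satisfies $[x]=[p_-(x)]=[p_+(x)]$ in $\Ch_0(X)$, where $p_{\pm}(x):=\lim_{t\to 0,\,\infty}(t\cdot x)\in X^{\C^*}$. Indeed, since the orbit $\C^*\cdot x$ is one-dimensional, its closure $C_x\subset X$ is a $\C^*$-invariant projective curve; as $\C^*$ admits no nontrivial action on a smooth projective curve of positive genus, the normalization $\widetilde{C}_x$ must be isomorphic to $\P^1$. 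Composing this normalization with $C_x\hookrightarrow X$ yields a morphism $f_x:\P^1\to X$ with $f_x(0)=p_-(x)$, $f_x(\infty)=p_+(x)$, and $f_x(1)=x$; since $[0]\sim [1]\sim [\infty]$ in $\Ch_0(\P^1)$, proper pushforward gives the claim.

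Consequently $\Ch_0(X)$ is generated by the classes $\{[p]:p\in X^{\C^*}\}$, with $[p_-(O)]=[p_+(O)]$ for every one-dimensional $\C^*$-orbit $O\subset X$. I would then introduce the \emph{orbit graph} $\Gamma$ having vertex set $X^{\C^*}$ and an edge $\{p_-(O),p_+(O)\}$ for each such $O$; by the previous step, fixed points in the same connected component of $\Gamma$ have equal classes in $\Ch_0(X)$, so the problem reduces to showing that $\Gamma$ is connected.

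To deduce connectedness of $\Gamma$ from that of $X$, suppose $\Gamma=\Gamma_1\sqcup\cdots\sqcup \Gamma_k$ with vertex sets $V_j\subset X^{\C^*}$, and put $Y_j:=\bigcup_{p\in V_j}\overline{X^+(p)}$, where $X^+(p):=\{x\in X:\lim_{t\to 0}(t\cdot x)=p\}$ is the Bialynicki-Birula stratum. Each $Y_j$ is a closed $\C^*$-invariant subvariety, and $\bigcup_j Y_j=X$ since $X=\bigsqcup_p X^+(p)$; were the $Y_j$'s pairwise disjoint, connectedness of $X$ would force $k=1$. Disjointness amounts to the claim that any fixed point $q\in\overline{X^+(p)}\setminus\{p\}$ must lie in the same $\Gamma$-component as $p$, which I would establish by choosing a sequence $x_n\in X^+(p)$ with $x_n\to q$ and passing to a limit of the orbit closures $\overline{\C^*\cdot x_n}$ inside the Chow variety of effective $1$-cycles of bounded degree on $X$; the resulting $\C^*$-invariant limit cycle has connected support containing both $p$ and $q$, and its irreducible components, being $\C^*$-invariant irreducible curves, are orbit closures, hence edges of $\Gamma$ forming a path from $p$ to $q$. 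The main obstacle will be this compactness-and-limit argument in the Chow variety: one must verify that the degrees of the $\overline{\C^*\cdot x_n}$ remain bounded as $x_n\to q$, that the support of the limit cycle is genuinely connected, and that it truly links $p$ and $q$ through a chain of one-dimensional orbit closures rather than merely accumulating multiplicity at an intermediate fixed point.
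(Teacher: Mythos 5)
Your proposal is correct in outline, but it takes a genuinely different route from the paper. The paper argues homologically: by Bialynicki-Birula's $A$-equivalence it finds a $\C^*$-invariant Zariski open $U\cong U'\times\C^*$ with closed complement $Z$, then plays the localization sequence for (higher) Chow groups against the Borel--Moore homology sequence, and concludes $\Ch_0(X)\cong H_0(X,\Z)$ by the Five Lemma and Noetherian induction on $Z$; connectedness then gives $\Z$. You instead argue geometrically: rational equivalence along orbit closures (each is the image of a $\P^1$, since the orbit map $\C^*\to X$ extends by properness) shows $\Ch_0(X)$ is generated by the fixed-point classes --- which, incidentally, reproves the surjectivity statement of Remark \ref{Rem4.21} --- and reduces everything to connectedness of the orbit graph $\Gamma$. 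Your reduction via the closed cover $X=\bigcup_j Y_j$ is sound, with one small repair: a point of $Y_i\cap Y_j$ need not be fixed, so replace it by its limit $\lim_{t\to 0}t\cdot x$, which lies in both closed invariant sets $\overline{X^+(p)}$ and $\overline{X^+(q)}$, before invoking your claim about fixed points in stratum closures. The compactness step you flag is genuinely needed, but all three verifications are standard and true: degrees of orbit closures are uniformly bounded (lift the action to the normalization, embed it equivariantly by Sumihiro's theorem so that orbit closures have degree bounded by the weight spread, and compare the two polarizations; alternatively, the orbit closures form a constructible family in $X\times X$, so only finitely many degrees occur), the locus of cycles with connected support is closed in the Chow variety and supports converge in the Hausdorff metric (so the limit support is connected and contains both $p$ and $q$), and the limit cycle is invariant, so its components are orbit closures which pairwise meet only at fixed points, yielding a path in $\Gamma$. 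Finally, after connectedness of $\Gamma$ you should add one line: $\Ch_0(X)$ is then cyclic, generated by the class of a point, and the degree homomorphism $\Ch_0(X)\to\Z$ takes this generator to $1$, so the group is infinite cyclic rather than a proper quotient of $\Z$. What each approach buys: the paper's proof gives the stronger conclusion $\Ch_0(X)\cong H_0(X,\Z)$ without connectedness hypotheses and sets up the diagram reused for Lawson homology, while yours is more elementary --- no higher Chow groups, no Borel--Moore homology --- and makes the geometric mechanism (chains of invariant rational curves through the isolated fixed points) completely explicit.
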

\begin{proof}
Since  $X$  admits a $\C^*$-action with isolated fixed points, there exists a $\C^*$-invariant Zariski open set $U\subset X$  such that $U\cong U'\times \C^*$ (see \cite{Bialynicki-Birula1}).
Such  $U$ and $U'$  can be assumed to be non-singular if necessary. Set $Z=X-U$. By the localization sequence of higher chow groups and homotopy invariance, we get
$\Ch_0(U'\times \C^*,1)\cong \Ch_0(U')$. From the Poicar\'{e} duality, homotopy invariance of cohomology and the K\"{u}nneth formula for the Borel-Moore homology, we obtain that
$H_1^{BM}(U'\times \C^*)\cong H^{2n-1}(U'\times \C^*)\cong H^{2n-1}(U'\times S^1)\cong H_0^{BM}(U'\times S^1)\cong H_0^{BM}(U')$. Note that the
cycle class map $ \Ch_0(U')\to H_0^{BM}(U',\Z)$ is always surjective. Hence the higher cycle class map $\phi_0(U,1):\Ch_0(U,1)\to H_{1}^{BM}(U,\Z)$ is surjective.

By applying the localization sequence to $(X,Z)$ and using the natural transform for the higher chow group to the singular homology group, we get
{\small
\begin{equation}
\xymatrix{\Ch_0(U,1)\ar[r]\ar@{>>}[d]&\Ch_0(Z)\ar[r]\ar[d]^{\cong}&\Ch_0(X)\ar[r]\ar[d]&\Ch_0(U)\ar[r]\ar[d]^{\cong}&
0
\\
H_{1}^{BM}(U,\Z)\ar[r]&H_{0}(Z,\Z)\ar[r]&H_{0}(X,\Z)\ar[r]&H_{0}^{BM}(U,\Z)\ar[r]&0.
}
\end{equation}
}

By induction hypothesis, we have the isomorphism $\Ch_0(Z)\stackrel{\cong}{\to} H_{0}(Z,\Z)$. Note that
$\Ch_0(U)\cong \Ch_0(U'\times\C)=0$ since a point moving a $\C$ direction to infinite, which is not on $U$.
Therefore $\Ch_0(U)=0=H_{0}^{BM}(U,\Z)$. Now we get the isomorphism $\Ch_0(X)\stackrel{\cong}{\to} H_{0}(X,\Z)$ by the Five Lemma.
Hence  $\Ch_0(X)\stackrel{\cong}{\to} \Z$ since $X$ is connected.
This completes the proof of the proposition.
\end{proof}

\begin{remark}\label{Rem4.21}
In fact, from the proof of Proposition \ref{Prop4.19},  we have shown the following result:
 If $X$ is a connected projective variety admitting a $\C^*$-action with nonempty fixed point set $X^{\C^*}$, then the inclusion $i:X^{\C^*}\to X$ induces a surjective $\Ch_0(X^{\C^*})\to \Ch_0(X)$.
\end{remark}

\begin{remark}
If $X$ is smooth projective variety admitting a $\C^*$-action with isolated fixed point,
 then $X$ admits a cellular decomposition(see \cite{Bialynicki-Birula2}) and $\Ch_p(X)\cong H_{2p}(X,\Z)$
for all $p\geq 0$. However, in the case that $X$ is singular, $\Ch_p(X)\cong H_{2p}(X,\Z)$ can be wrong for $p>0$ by the following example.
\end{remark}

\begin{example}
Let $S$ be the surface construction in Example \ref{exam4.7}, $\Ch_1(S)\ncong H_{2}(S,\Z)$. Moreover, $\Ch_1(S)_{hom}\neq 0$.
Recall that the relations among $\widetilde{S}, S $ and $Y$ were given in diagram \eqref{equ4.8}.
By using $\tilde{\sigma}:\widetilde{S}\to S$ and the localization sequence for Chow group of 1-cycles, we
get the difference between $\Ch_1(\widetilde{S})$ and $\Ch_1(S)$ is at most rank 2 (generated by the cycle classes of $\widetilde{C}_1$ and $\widetilde{C}_2$) since
the sequence $\Ch_1(\widetilde{C}_1\cup \widetilde{C}_2)\to \Ch_1(\widetilde{S})\to \Ch_1({S})\to 0$ is exact and  $\Ch_1(\widetilde{C}_1\cup \widetilde{C}_2)\cong \Z\oplus\Z$.
So $\Ch_1(\widetilde{S})_{hom}\cong \Ch_1(S)_{hom}$.
On the other hand,   $\Ch_1(\widetilde{S})\cong \Ch_1(Y)\oplus \Z^2\cong (\Ch_0(C)\oplus\Z)\oplus\Z^2$. Hence $\Ch_1(\widetilde{S})_{hom}\cong \Ch_0(C)\cong J(C)\neq 0$, where
$J(C)$ is the Jacobi of $C$ of genus $g(C)\geq 1$.
\end{example}

%%%%%%%%%%%%%%%%%%%%%%%%%%%%%%%%%%%%%%%%%%%%

By applying to a possible singular projective variety carrying a holomorphic vector field with isolated zeroes, we have the following result.
\begin{corollary}
Let $X$ be a (possible singular) complex projective algebraic variety which admits a holomorphic
vector field $V$ whose zero set $Z$ is isolated and nonempty. Then the cycle class map
 we have $\Ch_0(X)\cong \Z$.
\end{corollary}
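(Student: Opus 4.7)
The plan is to invoke Lieberman's theorem recalled in the Introduction and thereby reduce the statement to Propositions \ref{Prop2.2} and \ref{Prop4.19}. Since $V$ is a holomorphic vector field on the projective variety $X$ with nonempty zero set, Lieberman's theorem provides an algebraic subgroup $G \subset \mathrm{Aut}(X)$ of the form $(\C^*)^a \times \C^b$ with $b \in \{0,1\}$, realized as the algebraic envelope of the flow $\{\exp(tV)\}_{t \in \C}$, and the fixed locus $X^G$ coincides with $Z$, hence is isolated and nonempty by hypothesis.

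I would split into two cases according to the value of $b$. If $b = 0$, then $G = (\C^*)^a$ is a torus, and a generic one-parameter subgroup $\C^* \hookrightarrow G$ has fixed set on $X$ equal to $X^G = Z$, since the fixed loci of cocharacters of a torus jump only across a finite union of proper subtori. Thus $X$ carries a $\C^*$-action with isolated fixed points, and Proposition \ref{Prop4.19} directly yields $\Ch_0(X) \cong \Z$.

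If $b = 1$, the additive subgroup $\C \subset G$ furnishes a $\C$-action on $X$. Its fixed locus $X^{\C}$ contains $Z$ but may be strictly larger, so Proposition \ref{Prop2.2} does not apply directly. I would first run the $\C$-action and use the strengthened form of Proposition \ref{Prop2.2} recorded in the remark that immediately follows it, obtaining $\Ch_0(X) \cong \Ch_0(X^{\C})$. Since $G$ is abelian, the complementary torus $(\C^*)^a$ preserves $X^{\C}$ and acts on it with fixed locus $X^G = Z$, which is isolated; a generic cocharacter argument together with Proposition \ref{Prop4.19} applied to each connected component of $X^{\C}$ that meets $Z$ then gives $\Ch_0(X^{\C}) \cong \Z$, whence $\Ch_0(X) \cong \Z$.

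The main obstacle is the case $b = 1$, where the $\C$-subgroup of $G$ acting on $X$ may have non-isolated fixed locus; the fix is the two-step reduction above. Controlling the number of connected components of $X^{\C}$ so that the final answer is $\Z$ rather than a higher-rank free abelian group uses the connectedness of $X$ together with Remark \ref{Rem4.21}, which asserts that the inclusion of the fixed locus induces a surjection on $\Ch_0$.
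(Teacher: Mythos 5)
Your case $b=0$ is fine (a generic cocharacter of the torus has fixed locus equal to $X^{(\C^*)^a}=Z$, so Proposition \ref{Prop4.19} applies directly, and this is if anything a shortcut compared with the paper's induction), but the mixed case $a\geq 1$, $b=1$ has a genuine gap at exactly the point you flag. After your first step you have $\Ch_0(X)\cong \Ch_0(X^{\C})$, and $X^{\C}$ is a closed, possibly disconnected, projective subvariety; each connected component is preserved by the (connected) torus, is projective, hence contains a $\C^*$-fixed point, which necessarily lies in $Z$. So Proposition \ref{Prop4.19} applies to \emph{every} component and yields $\Ch_0(X^{\C})\cong\Z^{c}$, where $c$ is the number of components of $X^{\C}$. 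To conclude you must show $c=1$ (equivalently, that points of $Z$ lying in different components of $X^{\C}$ become rationally equivalent in $X$), and neither the connectedness of $X$ nor Remark \ref{Rem4.21} gives this: the remark is a surjectivity statement about the $\C^*$-fixed locus of $X$ and says nothing about the $\C$-fixed locus or its components, and a connected variety can in general have a disconnected fixed locus. What your route actually needs here is Horrocks's theorem that the fixed-point scheme of an additive group action on a connected complete variety is connected --- a nontrivial input you neither state nor prove.

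The paper avoids this by reversing the order of the two reductions: it first peels off the $\C^*$ factors, using Remark \ref{Rem4.21} iteratively to get a surjection $\Ch_0(X^{(\C^*)^a})\to\Ch_0(X)$, and only then uses the $\C$-action on $X^{(\C^*)^a}$, whose fixed locus is exactly the finite set $Z$; the remark following Proposition \ref{Prop2.2} then produces a surjection $\Ch_0(Z)\to\Ch_0(X)$. Hence $\Ch_0(X)$ is finitely generated, and the paper concludes that the degree map $\Ch_0(X)\to H_0(X,\Z)\cong\Z$ is injective (the point being that the degree-zero part of $\Ch_0$ of a connected projective variety is divisible, so a finitely generated one vanishes), giving $\Ch_0(X)\cong\Z$. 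So your proposal can be repaired either by citing Horrocks's connectedness theorem at the step above, or by reordering the reductions as the paper does and finishing with this finite-generation-plus-injectivity-of-degree argument; as written, that final identification of the rank is the missing step, and Remark \ref{Rem4.21} alone cannot supply it.
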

\begin{proof}
Recall that a holomorphic vector field generates a $G$-action on $X$, where $G\cong (\C^*)^k\times \C$ or $G\cong (\C^*)^k$. Write $G\cong G_1\times \C^* $
and $X_1:=X^{\C^*}$. From Remark   \ref{Rem4.21}, the inclusion $X_1\to X$ induces a surjection $\Ch_0(X_1)\to \Ch_0(X)$. If  $G\cong (\C^*)^k$,  we get the surjection $\Ch_0(V)\to \Ch_0(X)$ by induction. If $G\cong (\C^*)^k\times \C$, we get the surjection  $\Ch_0(V_1)\to \Ch_0(X)$ by induction, where $V_1:=X^{(\C^*)^k}$. Note that $V_1$ admits a $\C$-action whose fixed point is $V$. By Proposition \ref{Prop2.2}, we have $\Ch_0(V)\cong \Ch_0(V_1)$.
Therefore, the inclusion $V\hookrightarrow X$ induces a surjection $\Ch_0(V)\to \Ch_0(X)$. By assumption, $V$ is finite points. Hence $\Ch_0(X)$ is of finite rank and so $\Ch_0(X)\to H_0(X,\Z)\cong \Z$ is injective. Clearly,  $\Ch_0(X)\neq 0$ and we get $\Ch_0(X)\cong \Z$.
\end{proof}

%%%%%%%%%%%%%%%%%%%%%%%%%%%%%%%%%%%%%%%%%%%%%%%%%%

Applying to Lawson homology, we get structure for 1-cycles.
\begin{lemma} \label{Lemma4.23}
For for any projective variety $X$ and any integer $k\geq 2r\geq0$ and $n\neq0$, we have the following formula
\begin{equation}\label{eqn4.24}
L_rH_k(X\times\C^*)\cong L_{r-1}H_{k-2}(X)\oplus L_rH_{k-1}(X).
\end{equation}
\end{lemma}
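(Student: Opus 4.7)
The plan is to derive the formula from the Friedlander--Gabber localization long exact sequence in Lawson homology, applied to the closed immersion $\iota: X\times\{0\}\hookrightarrow X\times\C$ with open complement $j: X\times\C^{*}\hookrightarrow X\times\C$:
\[
\cdots\to L_rH_k(X)\xrightarrow{\iota_{*}} L_rH_k(X\times\C)\xrightarrow{j^{*}} L_rH_k(X\times\C^{*})\xrightarrow{\partial} L_rH_{k-1}(X)\to \cdots.
\]
Homotopy invariance of Lawson homology provides an isomorphism $\pi^{*}: L_{r-1}H_{k-2}(X)\xrightarrow{\cong}L_rH_k(X\times\C)$ via flat pullback along the projection $\pi: X\times\C\to X$. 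Under this identification the plan reduces to two inputs: vanishing of $\iota_{*}$, and splitting of the resulting short exact sequence.

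For $\iota_{*}=0$, I would work with the Lima-Filho cycle space model $\cZ_r(X\times\C)=\cZ_r(X\times\P^{1})/\cZ_r(X\times\{\infty\})$ and write down an explicit null-homotopy. Let $\gamma:[0,1]\to\P^{1}$ be a continuous path from $0$ to $\infty$. Then $H:\cZ_r(X)\times[0,1]\to\cZ_r(X\times\C)$, $(\alpha,t)\mapsto\alpha\times\{\gamma(t)\}$, is continuous with $H(\cdot,0)=\iota_{*}$ and $H(\cdot,1)=0$, since $\alpha\times\{\infty\}$ lies in $\cZ_r(X\times\{\infty\})$ and so represents $0$ in the quotient. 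Thus $\iota_{*}$ is null-homotopic and induces the zero map on Lawson homology; running the same argument in degree $k-1$ also kills the next pushforward, so the localization sequence collapses to
\[
0\to L_{r-1}H_{k-2}(X)\xrightarrow{j^{*}\circ\pi^{*}} L_rH_k(X\times\C^{*})\xrightarrow{\partial} L_rH_{k-1}(X)\to 0.
\]

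To split this, I would invoke the external product in Lawson homology, $L_rH_{j}(X)\otimes L_sH_{l}(Y)\to L_{r+s}H_{j+l}(X\times Y)$, and cross with the generator $\beta\in L_0H_1(\C^{*})\cong H_1(\C^{*};\Z)\cong\Z$. This defines $\sigma: L_rH_{k-1}(X)\to L_rH_k(X\times\C^{*})$ by $\alpha\mapsto\alpha\times\beta$. Naturality of the external product with respect to the connecting homomorphism gives $\partial(\alpha\times\beta)=\alpha\times\partial(\beta)$, and a direct computation of $\partial$ for the pair $(\C,\{0\})$ (using the fibration $\cZ_0(\{0\})\to\cZ_0(\C)\to\cZ_0(\C^{*})$ together with the Lima-Filho identification of $\pi_{i}\cZ_0(\C)$ with Borel-Moore homology $H_i^{{\rm BM}}(\C)$) shows that $\partial(\beta)$ generates $L_0H_0(\{0\})=\Z$. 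Hence $\partial\circ\sigma$ is the identity on $L_rH_{k-1}(X)$, so $\sigma$ is a section of $\partial$ and splits the sequence, giving the claimed decomposition.

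The main obstacle is this splitting step, which rests on the external product in Lawson homology and its compatibility with the localization boundary; both are standard within the Friedlander--Lawson and Friedlander--Gabber framework but require careful invocation. By contrast, the vanishing of $\iota_{*}$ is a transparent null-homotopy argument in $\P^{1}$, reflecting the fact that a cycle at $0$ can be pushed to infinity along a path, where it becomes trivial in the quasi-projective cycle space.
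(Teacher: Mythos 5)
Your proposal is correct, and its first two steps coincide with the paper's: the paper also runs the localization sequence for the pair $(X\times\C,\,X\times\{0\})$, identifies $L_rH_k(X\times\C)\cong L_{r-1}H_{k-2}(X)$ by the homotopy property, and kills $i_*$ by moving the slice from $0$ to $\infty$ inside $X\times\P^1$ --- there this is phrased as $i_*=Res\circ i_{0*}=Res\circ i_{\infty*}=0$ via algebraic homotopy invariance of the $\P^1$-family, which is your explicit null-homotopy in the Lima-Filho quotient model in different clothing. Where you genuinely diverge is the splitting of the resulting short exact sequence. The paper splits at the cycle-space level, asserting that slicing, $c\mapsto c\cap(X\times\{0\})$, combined with $\cZ_{r-1}(X)\simeq\cZ_r(X\times\C)$, furnishes a section of the quotient map $\cZ_r(X\times\C)\to\cZ_r(X\times\C)/\cZ_r(X\times\{0\})=\cZ_r(X\times\C^*)$; you instead build a section of $\partial$ by crossing with a generator $\beta\in L_0H_1(\C^*)\cong\Z$ and using $\partial(\alpha\times\beta)=\pm\,\alpha\times\partial\beta$ with $\partial\beta$ generating $L_0H_0(\{0\})$. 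Your route buys some robustness: the paper's slicing map is not obviously defined for cycles with components in, or meeting improperly, $X\times\{0\}$, nor obviously well defined and continuous on the quotient, whereas your section needs only the external product pairing $\cZ_r(X)\wedge\cZ_0(-)\to\cZ_r(X\times -)$ and its compatibility with the connecting map of the fibration sequence $\cZ_r(X\times\{0\})\to\cZ_r(X\times\C)\to\cZ_r(X\times\C^*)$; both follow by mapping fibration sequences (lift a loop representing $\beta$ to a path in $\cZ_0(\C)$ and cross with a representative of $\alpha$), though, as you rightly flag, setting up the pairing for quasi-projective cycle spaces (descending it through the Lima-Filho presentations) is the real work, and a possible sign only replaces the section by its negative. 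A minor remark applying to both arguments: for $r=0$ the summand $L_{-1}H_{k-2}(X)$ must be read with the usual convention $L_{-1}H_*:=L_0H_*$.
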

\begin{proof}
 First, we  note that the pair $(X\times \C, X\times \{0\})$, we have
the long exact sequence of Lawson homology:
\begin{equation}\label{eqn4.25}
...\stackrel{\partial}{\to}L_rH_{k}(X)\stackrel{i_{*}}{\to} L_rH_k(X\times \C)\stackrel{Res}{\longrightarrow} L_rH_k(X\times \C^*)\stackrel{\partial}{\to} L_rH_{k-1}(X)\to...
\end{equation}
where $i:X=X\times\{0\}\to X\times\C$ is the inclusion, $Res$ is restriction map and $\partial$ is the boundary map.

The long exact sequence of Lawson homology for the pair $(X\times \P^1, X\times \{0\})$ is
$$
...\stackrel{\partial}{\to}L_rH_{k}(X)\stackrel{i_{\infty*}}{\to} L_rH_k(X\times \P^1)\stackrel{Res}
{\longrightarrow} L_rH_k(X\times \C)\stackrel{\partial}{\to} L_rH_{k-1}(X)\to...
$$ where $i_{\infty}:X=X\times \{\infty\}\to X\times\P^1$ is the inclusion.

Then, from the $\C^1$-homotopy invariance of Lawson homology, we get $i_{0*}=i_{\infty*}:L_pH_k(X)\to L_pH_k(X\times\P^1)$,
where $i_0:X=X\times \{0\}\to X\times\P^1$ is the inclusion. From the definition of $i$ and $i_0$, we have
$i_*=Res\circ i_{0*}$, where $Res:L_rH_k(X\times \P^1)\to L_rH_k(X\times \C)$ is the restriction map. Hence we obtain
\begin{equation*}
i_*=Res\circ i_{0*}=Res\circ i_{\infty*}=0.
\end{equation*}
Therefore, Equation (\ref{eqn4.25}) is broken into short exact sequences
$$
0{\to} L_rH_k(X\times \C)\stackrel{Res}{\longrightarrow} L_rH_k(X\times \C^*)\stackrel{\partial}{\to} L_rH_{k-1}(X)\to 0.
$$
This sequence splits since  the map $\cZ_r(X\times \C^*)=\cZ_r(X\times \C)/\cZ_r(X\times \{0\})\to
\cZ_{r-1}(X)\simeq \cZ_r(X\times \C)$ given by $c\mapsto c\cap (X\times\{0\})$ gives a section of the projection
$\cZ_r(X\times \C)\to \cZ_r(X\times \C)/\cZ_r(X\times \{0\})$. So we get Equation (\ref{eqn4.24}).
This completes the proof of the lemma.
\end{proof}

Now we study the structure of Lawson homology under a $\C^*$-action. When $X$ admits $\C$-action with isolated fixed point, it was shown that $L_1H_k(X)\cong H_k(X,\Z)$ (see Proposition \ref{Prop2.4}). Inspired by this result,
it is natural to ask the following question.

\begin{question}\label{ques4.28}
 Let $X$ be a complex projective variety admitting a $\C^*$-action with isolated fixed point. Does $L_1H_k(X)\cong H_k(X,\Z)$  hold for $k\geq 2$?
 \end{question}

The positive answer to this question would be   an analogue of Proposition \ref{Prop2.4}.
Contrary to the analogue between Proposition  \ref{Prop2.2} and \ref{Prop4.19}, it is  surprising to a certain degree that the answer to Question \ref{ques4.28} is negative
in the sense that for each $k\geq 2$, we can find $X$ (depending on $k$) satisfying conditions in the question such that $L_1H_k(X)\ncong H_k(X,\Z)$.

\begin{example}\label{Exam4.26}
Let $S$ be the variety given in Example \ref{exam4.7},  then $S\times S$ admits a $\C^*$-action with isolated fixed points induced by the $\C^*$-action on $S$.
We have
$$L_1H_2(S\times S)\cong H_2(S\times S,\Z),$$
and
$$ L_1H_3(S\times S)\ncong H_3(S\times S,\Z).$$
\end{example}
\begin{proof}
The $\C^*$-action $\phi:\C^*\times S\to S$, $(t,x)\mapsto\phi(t,x)$ induces a $\C^*$-action $(t, (x,y))\mapsto (tx, ty)$ on $S\times S$. The fixed point set $(S\times S)^{\C^*}\subset S^{\C^*}\times S^{\C^*} $ is finite since
$S^{\C^*}$ is.

By construction, we have $H_1(S,\Z)=0$. By K\"{u}nneth formula, $H_2(S\times S,\Z)\cong  H_2(S,\Z)\oplus H_2(S,\Z)$. Note $H_2(S,\Z)\cong \Z^3$ is generated by the homological classes of algebraic cycles
$\tilde{\sigma}(\sigma^{-1}(\P^1\times c_0)$, $\tilde{\sigma}(\sigma^{-1}(p_i))$, where  $c_0$ is a point of $C$ different from $p_i$ for $i=1,2$. Hence  $H_2(S\times S,\Z)$ is generated by algebraic cycles and
so the cycle class map  $L_1H_2(S\times S)\to H_2(S\times S,\Z)$ is surjective.

From the construction in Example \ref{exam4.7}, ${\sigma}:\widetilde{S}\to Y=C\times \P^1$ is the blow up of two point $p_1\in C_1, p_2\in C_2$. Set
 $U:=Y-C_1-C_2\cong \widetilde{S}-{\sigma}^{-1}(C_1)-{\sigma}^{-1}(C_2)$, where ${\sigma}^{-1}(C_i)=\widetilde{C}_i\cup E_i$ and $E_i\cong\P^1$. One gets $U\cong C\times \C^*$.
 Since $\tilde{\sigma}:\widetilde{S}\to S$ is the blow down and each $\widetilde{C}_i$ collapses to a point, $S- \tilde{\sigma}(E_1)-\tilde{\sigma}(E_2)\cong U$.
Since only $\widetilde{C}_i$ collapses under $\tilde{\sigma}$, $\tilde{\sigma}(E_i)\cong E_i\cong\P^1$. Set $Z:= S\times S-U\times U$ and $\widetilde{E}_i:=\tilde{\sigma}(E_i)$, then $Z$ is the union
$\big((\widetilde{E}_1\cup \widetilde{E}_2)\times S \big)\bigcup  \big(S\times (\widetilde{E}_1\cup \widetilde{E}_2)\big) $.   Set $\widetilde{Z}:=\widetilde{S}\times\widetilde{ S}-U\times U$ and then
$\widetilde{Z}$ is the union $\big(({\sigma}^{-1}(C_1)\cup {\sigma}^{-1}(C_2))\times \widetilde{S} \big)\bigcup  \big(\widetilde{S}\times ({\sigma}^{-1}(C_1)\cup {\sigma}^{-1}(C_2))\big) $.
From the long localization exact sequence of Lawson homology for $(\widetilde{S},\widetilde{Z})$ and $(S,Z)$, we have
{\tiny
$$
\xymatrix{ ...\ar[r]&L_1H_3(\widetilde{U})\ar[r]\ar[d]^{=} &L_1H_2(\widetilde{Z})\ar[d]\ar[r] &L_1H_{2}(\widetilde{S}\times \widetilde{S})\ar[d]^{(\sigma\times\sigma)_*}\ar[r]&L_1H_2(U)\ar[d]\\
...\ar[r] &L_1H_3(\widetilde{U})\ar[r] &L_1H_2(Z)\ar[r] &L_1H_{2}(S\times S)\ar[r] &L_1H_2(U)
 }
$$
}

By homotopy invariance  and localization sequences of Lawson homology, one gets
$L_1H_k(Z)\cong H_k(Z,\Z)$ and $L_1H_k(\widetilde{Z})\cong H_k(\widetilde{Z},\Z)$ for $k\geq 2$.
From construction, the collapse $\widetilde{Z}\to Z$ induces a surjective map $H_2(\widetilde{Z},\Z)\to H_2(Z,\Z)$.

From $U\cong C\times \C^*$ and Lemma 4.23, we get isomorphisms
 $$
 \begin{array}{ccl}
 L_1H_2(U\times U)
 &\cong& L_1H_2(C\times C\times \C^*\times\C^*)\\
 &\cong& L_0H_0(C\times C\times \C^*)\\
 &\cong& H_0^{BM}(C\times C\times \C^*,\Z)\\
 &=&0.
 \end{array}
 $$

Therefore,
$(\sigma\times\sigma)_*$ is a surjective map. Note that $\widetilde{S}\times \widetilde{S}$ is nonsingular and projective,
a directed computation by localization and blowup formula for Lawson homology (see \cite{Hu2}) yields $L_1H_2(\widetilde{S}\times \widetilde{S})_{hom}=0$.
Hence $L_1H_2(S\times S)_{hom}=0$ and $L_1H_2(S\times S)\to H_2(S\times S,\Z)$ is injective.

We need to identify  $L_1H_3(U\times U)$ and $H_3^{BM}(U\times U,\Z)$ so that one can compare that relation between
$L_1H_3(S\times S)$ and $H_3(S\times S,\Z)$.

 By Lemma \ref{Lemma4.23}, we get
 $$
 \begin{array}{ccl}
 L_1H_3(U\times U)
 &\cong& L_1H_3(C\times C\times \C^*\times\C^*)\\
  &\cong& L_0H_1(C\times C\times \C^*)\oplus L_1H_2(C\times C\times \C^*)\\
  &\cong& L_0H_0(C\times C)\oplus L_0H_0(C\times C)\\
 &\cong&  \Z\oplus\Z.
 \end{array}
 $$

It is not hard to check that
 $$
 \begin{array}{ccl}
 H_3^{BM}(U\times U,\Z)
 &\cong& H_3^{BM}(C\times C\times \C^*\times\C^*,\Z)\\
  &\cong& H_1^{BM}(C\times C\times \C^*,\Z)\oplus H_2^{BM}(C\times C\times \C^*,\Z)\\
  &\cong& H_0^{BM}(C\times C)\oplus H_0^{BM}(C\times C)\oplus  H_1^{BM}(C\times C)\\
 &\cong&  \Z\oplus\Z\oplus H_1(C\times C).
 \end{array}
 $$

Hence the cycle class map
$$\Phi_{1,3}(U\times U):L_1H_3(U\times U)\to H_3^{BM}(U\times U,\Z)$$
is  not surjective. In particular, $\Phi_{1,3}(U)$ is not an isomorphism.

For simplicity in diagram $X:=S\times S$, $\widetilde{U}:=U\times U$. From the following commutative diagram
{\tiny
$$
\xymatrix{L_1H_{3}(Z)\ar[r]\ar[d]^{\cong}& L_1H_3(X)\ar[r]\ar[d]^{\cong?} &L_1H_3(\widetilde{U})\ar[r]\ar[d]^{\Phi_{1,3}(\widetilde{U})} &L_1H_2(Z)\ar[d]^{\cong}\ar[r] &L_1H_{2}(X)\ar[d]^{\cong}\\
H_{3}(Z,\Z) \ar[r]& H_{3}(X,\Z)\ar[r] &H_{3}^{BM}(\widetilde{U},\Z)\ar[r] &H_{2}(Z,\Z)\ar[r] & H_{2}(X,\Z)
}
$$}
and the Five lemma, we could obtain that $\Phi_{1,3}(U)$ is an isomorphism if $\Phi_{1,3}(X):L_1H_3(X)\to H_3(X,\Z)$ is. Therefore, $\Phi_{1,3}(X)$ is not an isomorphism.
\end{proof}

\begin{remark}

From Lemma \ref{Lemma4.23} and Example \ref{Exam4.26}, for each $k\geq 3$, one can  construct  projective varieties $X$ admitting $\C^*$-action with isolated fixed points
 such that $L_1H_k(X)\ncong H_k(X,\Z)$. Such a $X$ can be chosen as $X:=S\times S\times C^{k-3}$, where $C$ is the curve in Example \ref{exam4.2}.
 For $k=2$, a direct calculation shows that $L_1H_2(C\times C)\ncong H_2(C\times C,\Z)$.
  The detail  is left to the interested reader.
\end{remark}

\section{Applications to  Chow varieties}
\label{Section5}
 In this section, we shall first very briefly  review some known facts about Chow varieties, especially in algebraic and topological aspects and then give some new results.
Unless otherwise specified, Chow varieties defined over the complex numbers.

One of our purpose is to understand the algebraic and topological structure on the complex Chow variety $C_{p,d}(\P^n)_{\C})$  (or simply $C_{p,d}(\P^n)$ if there is no confusion)
 parameterizing effective $p$-cycles of
degree $d$ in the complex projective space $\P^n$.

In  degree 1 case, $C_{p,1}(\P^n)$  is exactly the Grassmannian of $(p+1)$-planes in $\C^{n+1}$, which is a space
of fundamental importance in  geometry and topology. In dimension 0 case, $C_{0,d}(\P^n)$ is the $d$-th symmetric product of $\P^n$, a ``correct" object to realize homology when $d$
tends to infinity.
 It is needless to explain here the importance of Chow varieties in algebraic cycles theory. Until recent years, it is surprising that
not many topological and algebraic invariants were known about $C_{p,d}(\P^n)$ for $d>1$.

\subsection{The origin of Chow variety}
Let $X\subset \P^n$ be a complex projective variety and let $C_{p,d}(X)\subset C_{p,d}(\P^n)$ be the subset containing those cycles $c=\sum a_i V_i\in C_{p,d}(\P^n)$ whose support
$\supp(c)=\cup V_i$ lies  in $X$, where $V_i$ is an irreducible projective variety of dimension $\dim V_i=p$, $a_i\in \Z^+$ and $\sum a_i=d$. It has been established by Chow and Van der Waerden in 1937 that each  $C_{p,d}(X)$ canonically carries the structure of a projective algebraic set (see \cite{Chow-Waerden}).
More intrinsically, the space of all effective $p$-cycles can be written as a countable disjoint union $\coprod_{\alpha\in H_{2p}(X,\Z)} C_{p,\alpha}(X)$, where each $C_{p,\alpha}(X)$
carries the structure of  a  projective algebraic set.

\subsection{The dimension and number of irreducible components}
In general, $C_{p,d}(\P^n)$ is not irreducible.  The simplest non-irreducible Chow varieties is $C_{1,3}(\P^3)$, which has two irreducible components.
Moreover, the different irreducible  components may have different dimension. Examples of Chow varieties including those parametrizing curves of low degrees (less than or equals to 4) in $\P^3$ can be
found in \cite{Gelfand-Kapranov-Zelevinsky}.

The exact number of irreducible components for $C_{p,d}(\P^n)$ is not known in general, even for $C_{1,d}(\P^3)$.
An upper bound of the number of irreducible components of $C_{p,d}(\P^n)$ was given by
$
N_{p,d,n}:=\bigg(^{nd+d}_{\quad n}\bigg)^{m_{p,d}},
$
where $m_{p,d}:=d\bigg(^{d+p-1}_{\quad p}\bigg)+\bigg(^{d+p-1}_{p-1}\bigg)$ (see Kollar \cite[Exer.3.28]{Kollar}).
We should mention that Kollar's book contains an excellent exposition on families of cycles over  arbitrary schemes.
Of course, this upper bound is usually much higher than the actual number of irreducible components for $C_{p,d}(\P^n)$
in many known cases. For example, there is exactly one component for $C_{0,d}(\P^n)$ for any $d$ and $n$.
For $d=1$ and arbitrary $n,p\geq 0$, $C_{p,1}(\P^n)$ is the Grassmannian parametering $(p+1)$-vector spaces in $\C^{n+1}$, which is irreducible.
For $d=2$ and arbitrary $n,p\geq 0$, there are at most two irreducible components for $C_{p,2}(\P^n)$.
By checking the possible genus of an irreducible curve with a given degree in $\P^3$ (see \cite[Ch. IV]{Hartshorne}), one can obtain that the irreducible components of $C_{1,d}(\P^3)$ are 1,2,4,8,14,27,46 corresponding to $d$ from 1 to 7. These numbers are really much smaller than the corresponding numbers $N_{p,d,n}$.

 The dimension of $C_{p,d}(\P^n)$ we mean the maximal
number of the dimension of its irreducible components.
Eisenbud and Harris in 1992 showed that the dimension of the space of effective  1-cycles of degree $d$ in $\P^n$  is
$$\dim C_{1,d}(\P^n)=\max\{2d(n-1),3(n-2)+d(d+3)/2 \}$$
(see \cite{Eisenbud-Harris}).

The dimension of $C_{p,d}(\P^n)$ was computed by Azcue in 1992 in his Ph.D. thesis under the direct of Harris (see \cite{Azcue}).
The explicit formula for  $\dim C_{p,d}(\P^n)$  can be found in a paper by Lehmann in 2017(see \cite{Lehmann}), that is,
$$\dim C_{p,d}(\P^n)=\max \Bigg\{ d(p+1)(n-p),\, \binom {d+p+1}{p+1} -1 + (p+2)(n-p-1) \Bigg\}.$$

\subsection{Homotopy and homology groups}
It is not hard to show that $C_{p,d}(\P^n)$ is connected as a topological space since every element $c$ is path-connected to $d\cdot L$, where $L$ is any fixed $p$-plane in $\P^n$.
By comparing connectedness of the morphism between  a variety and the fixed point set under the additive group  action,   Horrocks showed in 1969  that the algebraic fundamental group of
the Chow variety $C_{p,d}(\P^n)_{K}$  defined over an algebraically closed field $K$ is  trivial (see \cite{Horrocks}).
By using the similar method to complex varieties, A. Fujiki showed in 1995 that the topological fundamental group of  $C_{p,d}(\P^n)$ is trivial, i.e., $C_{p,d}(\P^n)$ is simply connected (see \cite{Fujiki}).

In a complete different way, Lawson in 1989 gave a very short proof of the simply connectedness of  $C_{p,d}(\P^n)$ by using Sard theorem for families (see \cite{Lawson1}). More important, in that paper, Lawson has established the Lawson homology theory and showed the famous Complex Suspension Theorem. The  author has observed that the methods in proving the Complex Suspension Theorem  can be used  to compute the higher homotopy group of $C_{p,d}(\P^n)$. The  author showed in 2010 that $\pi_2(C_{p,d}(\P^n))\cong \Z$ for all $d\geq 1$ and $0\leq p<n$. This statement $\pi_2(C_{p,d}(\P^n))\cong \Z$ was conjectured by Lawson in 1995 in \cite[p.141]{Lawson2}. For $p=n$, $C_{p,d}(\P^n)$ is a single point and so $\pi_2(C_{p,d}(\P^n))$ is trivial.  More results can be found in \cite{Hu3}
 on the stability of the homotopy group of $C_{p,d}(\P^n)$ when $p$ or $n$ increases.

For higher homotopy groups, a slightly weaker version of Lawson's open question is that
whether there is an isomorphism $\pi_k(C_{p,d}(\P^n))\cong \widetilde{H}_{k+2p}(\P^n,\Z)$ for $k\leq 2d$, where $\widetilde{H}_.(-,\Z)$ denotes the reduced singular homology with integer coefficients (see \cite[p.256]{Lawson1}). Lawson  showed  that
 there is a natural surjective map from $\pi_k(C_{p,d}(\P^n))$ to $\widetilde{H}_{k+2p}(\P^n,\Z)$. The  author showed in 2015 that the surjective map is actually an isomorphism.
Moreover, as its corollary, the homology group of $C_{p,d}(\P^n)$ has been computed up to $2d$ (see \cite{Hu4}).

\subsection{Euler characteristic}
By establishing a fixed point formula for compact complex spaces under a weakly analytic $S^1$-action, Lawson and Yau showed in 1987 that
the Euler characteristic $\chi(C_{p,d}(\P^n))$ of the complex Chow variety is given by a beautiful formula
$$\chi(C_{p,d}(\P^n))=\big(^{v_{p,n}+d-1}_{\quad\quad d}\big),$$ where $v_{p,n}=(^{n+1}_{p+1})$.

In 2013, the  author  gave a  direct and elementary proof of this formula (see \cite{Hu5}).  One of the main techniques  is ‘‘pulling of normal
cone" established  by Fulton, which was used by Lawson in proving his Complex Suspension Theorem (see \cite{Lawson1}).
The key observation  was that one can write $C_{p+1,d}(\P^{n+1})$ as disjoint union of quasi-projective varieties
$$C_{p+1,d}(\P^{n+1})_i=\coprod_{i=0}^d C_{p+1, i}(\P^n)\times T_{p+1,d-i}(\P^{n+1}),$$
where $T_{p+1,d-i}(\P^{n+1})$ is homotopic to $C_{p,d-i}(\P^{n})$ by the technique ‘‘pulling of normal
cone" . Hence one obtains  by the additive property of the Euler characteristic a recursive formula
$$\chi(C_{p+1, d}(\P^{n+1}))=\chi(C_{p, d}(\P^{n}))+\sum_{i=1}^d \chi(C_{p+1, i}(\P^n))\cdot\chi(C_{p,d-i}(\P^{n}))
$$ and complete the short proof of Lawson and Yau's formula.

The technique above are also able to use the compute the $l$-adic Euler-Poincar\'{e} characteristic of the Chow varieties $C_{p, d}(\P^{n})_K$ defined over
an algebraically closed field $K$. As an analogue in complex case, Friedlander showed in 1991 that there is an algebraic homotopy from  $T_{p+1,d-i}(\P^{n+1})$ to $C_{p,d-i}(\P^{n})$.
One got the generalization of Lawson-Yau's formula directly to Chow varieties  over an algebraically closed field $K$:
$$
\chi(C_{p,d}(\P^n)_K,l)=\big(^{v_{p,n}+d-1}_{\quad\quad d}\big), \quad\hbox{where $v_{p,n}=(^{n+1}_{p+1})$},
$$
where $\chi(X_K,l)$ denotes the $l$-adic Euler-Poincar\'{e} Characteristic of an algebraic variety $X_K$ over $K$.
The Euler Characteristic for the space of right-quaternionic cycles was also given with an explicit formula (see \cite{Hu5}).

 It seems that there is no way to compute the Euler characteristic $C_{p,\alpha}(X)$ for $X$ a generic projective variety. However, for
special varieties, such as toric varieties, Elizondo gave a beautiful formula for their  Euler characteristic in terms of the fans of the variety.

If one denotes the  \textbf{$p$-th Euler series} of a toric variety $X$ is defined by the following formal power series
$$
E_{p}(X):=\sum_{\alpha\in H_{2p}(X,\Z)}\chi(C_{p,\alpha}(X))\alpha.
$$

A toric variety  $X$ is a projective variety containing the algebraic
group $T=(\C^*)^{\times n}$ as a Zariski open
subset such that the action of $(\C^*)^{\times n}$  on itself extends to an action on $X$.
The action of $T$ on $X$ induces action on  $C_{p,\alpha}(X)$.

Denote by $V_1,...,V_N$ the $p$-dimensional invariant irreducible subvarieties of $X$. Let $e_{[V_i]}$ be the
characteristic function  of the subset $\{[V_i], i=1,2,..., N\}$ of the homology group $H_{2p}(X,\Z)$, where $[V]$ denotes its class
in $H_{2p}(X,\Z)$. Elizondo showed in 1994 that there is a beautiful formula for $E_{p}(X)$:
$$
E_{p}(X)=\prod_{1\leq i\leq N}\bigg(\frac{1}{1-e_{[V_i]}}\bigg).
$$

Elizondo and Lima-Filho showed 1998 taht The Euler-Chow series of the projectivization of the direct sum of two algebraic vector bundles  can be computed
in terms of that of the  projectivization of each of the vector bundles and their fiber product (see \cite{Elizondo-Lima}). More specifically,
let $E_1$ and $E_2$ be two algebraic vector bundle
over a projective variety $X$. Let $\P(E_1)$ (resp. $\P(E_2)$) be the projectivization of $E_1$ (resp. $E_2$).
Then the Euler-Chow series $E_p(\P(E_1\oplus E_2))$ can be computed in terms of that of $\P(E_1)$, $\P(E_2)$ and
$\P(E_1)\times_X \P(E_2)$, where the last one is the fiber product of $\P(E_1)$ and $\P(E_2)$ over $X$. This result  can be used to
compute the Chow series of Grassmannian and certain flag varieties.

\subsection{Virtual Betti and Hodge numbers }

For integers $n\geq p\geq 0$ and $d\geq 0$,
the  author showed in 2013 that
the  virtual Hodge $(r,s)$-number
of the Chow variety $C_{p,d}(\P^n)$ satisfies the following equations:
$$
\sum_{r-s=i}\tilde{h}^{r,s}(C_{p,d}(\P^n))=0
$$
for all $i\neq 0$,
$$
\sum_{r\geq0}\tilde{h}^{r,r}(C_{p,d}(\P^n))=\chi(C_{p,d}(\P^n)),
$$
$$\tilde{h}^{0,0}(C_{p,d}(\P^n))=1,$$
$$\tilde{h}^{r,0}(C_{p,d}(\P^n))=0,$$
and
$$\tilde{h}^{0,r}(C_{p,d}(\P^n))=0$$
for $r>0$ (see \cite{Hu}).

This also implies that  $\tilde{\beta}^{0}(C_{p,d}(\P^n))=1$ and
  $\tilde{\beta}^{1}(C_{p,d}(\P^n))=0$. It is worth
  to remark that for a complex singular projective variety $X$, $\tilde{\beta}^{0}(X)=1$ is independent of the
  connectedness of $X$, while $\tilde{\beta}^{1}(X)=0$ is independent of the simply connectedness of $X$.

Due to the lack understanding of the structure of $C_{p,d}(\P^n)$, we post the following  wild conjecture
on their virtual Hodge numbers and virtual Betti numbers.
\begin{conjecture}\label{Conj5.1}
$\tilde{h}^{r,s}(C_{p,d}(\P^n))=0$ for all $r\neq s$.
In particular, we conjecture that $\tilde{\beta}^{i}(C_{p,d}(\P^n))=0$ for $i$ odd.
\end{conjecture}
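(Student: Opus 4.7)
The plan is to reduce the conjecture to showing that $C_{p,d}(\P^n)$ admits, in the sense of the remark immediately following Proposition \ref{Prop4.9}, a decomposition into locally closed subvarieties $W_j$, each a Zariski-locally trivial $\C^{m_j}$-bundle over an isolated torus-fixed point. Granting such a decomposition, that remark yields
$$
H_{C_{p,d}(\P^n)}(u,v)=\sum_j (uv)^{m_j},
$$
a polynomial in the single variable $uv$; this is equivalent to $\tilde{h}^{r,s}(C_{p,d}(\P^n))=0$ for every $r\neq s$, and in particular forces $\tilde{\beta}^i(C_{p,d}(\P^n))=0$ for odd $i$. The natural source of such a decomposition is the standard action of $(\C^*)^{n+1}/\C^*\cong(\C^*)^n$ on $\P^n$, which induces a torus action on $C_{p,d}(\P^n)$ whose fixed points are the isolated cycles $\sum_{|I|=p+1} a_I L_I$, with $L_I$ the coordinate $p$-planes indexed by $(p+1)$-subsets $I\subset\{0,\dots,n\}$ and $\sum a_I=d$. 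Picking a generic one-parameter subgroup $\lambda:\C^*\to(\C^*)^n$ gives a $\C^*$-action with the same isolated fixed set, and I would try to take $W_j$ to be the Bialynicki-Birula attracting set $\{c:\lim_{t\to0}\lambda(t)\cdot c=F_j\}$.

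A more algebraic second route proceeds by induction on $p$, using the disjoint-union decomposition
$$
C_{p+1,d}(\P^{n+1})=\coprod_{i=0}^d C_{p+1,i}(\P^n)\times T_{p+1,d-i}(\P^{n+1})
$$
from the author's proof of the Lawson--Yau Euler characteristic formula reviewed in Section \ref{Section5}. The base case $p=0$ reduces to the symmetric product $C_{0,d}(\P^n)=\sp^d(\P^n)$, for which Cheah's formula
$$
\sum_{d\geq0} H_{\sp^d(\P^n)}(u,v)\,t^d=\prod_{j=0}^n\frac{1}{1-t(uv)^j}
$$
manifestly gives each coefficient as a polynomial in $uv$. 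For the inductive step, by the additivity and multiplicativity of $H_{(-)}(u,v)$, it suffices to show that $H_{T_{p+1,d-i}(\P^{n+1})}(u,v)=(uv)^{k}\,H_{C_{p,d-i}(\P^n)}(u,v)$ for some $k$; equivalently, Lawson's ``pulling to the normal cone'' should be upgraded from a mere topological homotopy to a Zariski-locally trivial affine bundle $T_{p+1,d-i}(\P^{n+1})\to C_{p,d-i}(\P^n)$. Multiplying polynomials in $uv$ preserves the polynomial-in-$uv$ property, so this would close the induction.

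The main obstacle, in either route, is the singular nature of $C_{p,d}(\P^n)$. For a singular variety with a $\C^*$-action, the Bialynicki-Birula attracting cells over isolated fixed points need not be Zariski-locally trivial affine bundles, and an invariant Whitney stratification need not be singularity preserving in the sense of Proposition \ref{Prop4.9}, as the cone example in the remark after that proposition shows; Chow varieties are moreover notoriously singular and possibly non-reduced, with complicated components noted by Mumford and reviewed in Section \ref{Section5}. A plausible shortcut for the first route is to exhibit a $(\C^*)^n$-invariant Whitney stratification of $C_{p,d}(\P^n)$ in which every $c$ and its flow limit $\lim_{t\to 0}\lambda(t)\cdot c$ lie in the same stratum; Proposition \ref{Prop4.9} then applies directly, and the vanishing of $\tilde{h}^{r,s}$ for $r\neq s$ follows because each isolated fixed point contributes only the pure term $(uv)^{m_j}$. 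Verifying the singularity-preserving property on $C_{p,d}(\P^n)$ itself would likely require a careful local analysis via the Chow form, or an equivariant embedding of the Chow variety into a better-understood torus-equivariant ambient where the condition can be checked combinatorially.
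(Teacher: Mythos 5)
The statement you are proving is not a theorem of the paper at all: it is stated as a ``wild conjecture'' (Conjecture \ref{Conj5.1}), and the paper offers no proof, only supporting special cases ($p=0$ via Cheah's formula, $p=n-1$, $d=1$, and $d=2$) together with the weaker known identity from \cite{Hu} that $\sum_{r-s=i}\tilde{h}^{r,s}(C_{p,d}(\P^n))=0$ for $i\neq 0$. Your proposal is a strategy outline rather than a proof, and the two steps you leave unverified are exactly the open difficulties. For the first route, the paper itself shows why the torus action with isolated fixed points is not enough: Example \ref{exam4.7} exhibits an irreducible projective surface with a $\C^*$-action with isolated fixed points and $\tilde{\beta}_1\neq 0$, so the conclusion genuinely requires the singularity-preserving hypothesis of Proposition \ref{Prop4.9} (or Zariski-local triviality of the Bialynicki--Birula attracting sets), and you do not establish either for $C_{p,d}(\P^n)$; the cone example in the remark after Proposition \ref{Prop4.9} shows that even very natural linear $\C^*$-actions can fail this condition, so it cannot be waved through.

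The second route has an analogous gap. The decomposition into pieces $C_{p+1,i}(\P^n)\times T_{p+1,d-i}(\P^{n+1})$ only gives, via additivity and multiplicativity, a reduction to understanding $H_{T_{p+1,d-i}(\P^{n+1})}(u,v)$; but Lawson's ``pulling to the normal cone'' provides a topological (or, after Friedlander, algebraic) homotopy from $T_{p+1,d-i}(\P^{n+1})$ to $C_{p,d-i}(\P^n)$, and the virtual Hodge polynomial is a motivic invariant, not a homotopy invariant, so no identity of the form $H_{T}(u,v)=(uv)^k H_{C_{p,d-i}(\P^n)}(u,v)$ follows. (This is precisely why the argument of \cite{Hu} only yields the equalities of the sums $\sum_{r-s=i}\tilde{h}^{r,s}$, which, since virtual Hodge numbers of singular varieties may be negative, do not imply the termwise vanishing claimed in the conjecture.) Moreover, since $C_{p,d-i}(\P^n)$ is reducible with components of different dimensions, a single fiber dimension $k$ cannot exist; at best one could hope for a stratified affine-bundle structure, which is again unproven. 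In short, your reductions are reasonable and consistent with the evidence the paper assembles, but the statement remains open, and your write-up correctly identifies, without closing, the gaps that keep it so.
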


There are several examples supporting this conjecture. When $p=0$, $C_{p,d}(\P^n)=\sp^d(\P^n)$, its virtual Betti numbers and virtual Hodge numbers
have been computed in \cite{Cheah} and all their odd virtual Betti and virtual Hodge numbers vanish. When $p=n-1$, $C_{p,d}(\P^n)=C_{n-1,d}(\P^n)=\P^{(^{n+d}_{~d})-1}$ and
its  virtual Betti (resp. virtual Hodge numbers) are the same as its  usual Betti numbers (resp.usual Hodge numbers), which are zeroes.
  When $d=1$, $C_{p,d}(\P^n)$ is the Grassmannian  $G(p+1,\C^{n+1})$,  then one has  $\tilde{h}^{r,s}(G(p+1,\C^{n+1}))={h}^{r,s}(G(p+1,\C^{n+1}))=0$ for all $r\neq s$,
where ${h}^{r,s}(G(p+1,\C^{n+1}))$ denotes the Hodge $(r,s)$-number of  $G(p+1,\C^{n+1})$.

\begin{example}
For $d=2$ and all $p,n$, one also has ${h}^{r,s}(C_{p,2}(\P^n))=0$ for $r\neq s$ and
${\tilde{\beta}}^{2i-1}(C_{p,2}(\P^n))=0$ for  $i>0$.
\end{example}
\begin{proof}
Note that $C_{p,2}(\P^n)$ can be written as the union  $$C_{p,2}(\P^n)=\sp^2(G(p+1,\C^{n+1}))\cup Q_{p,n},$$
where $Q_{p,n}$  consists of effective irreducible $p$-cycles of degree $2$ in $\P^n$ and
$Q_{p,n}$ is a fiber bundle over the Grassmannian $G(p+2,n+1)$ with fiber the space $S$ of all smooth quadrics in
$\P^{p+1}$. Note that $S$ is isomorphic to $\P^{(^{p+3}_{~2})-1}-\sp^2(\P^{p+1})$ (see \cite{Hu3}).
Therefore,
{\small
$$
\begin{array}{ccl}
\widetilde{P}_{C_{p,2}(\P^n)}(t)&=&\widetilde{P}_{\sp^2(G(p+1,\C^{n+1}))}(t)+\widetilde{P}_{ Q_{p,n}}(t) \\
&=& \widetilde{P}_{\sp^2(G(p+1,\C^{n+1}))}(t)+\widetilde{P}_{G(p+2,n+1)}\cdot  \widetilde{P}_{\P^{(^{p+3}_{~2})-1}-\sp^2(\P^{p+1})}(t) \\
&=& \widetilde{P}_{\sp^2(G(p+1,\C^{n+1}))}(t)+\widetilde{P}_{G(p+2,n+1)}\cdot  (\widetilde{P}_{\P^{(^{p+3}_{~2})-1} }(t) -\widetilde{P}_{ \sp^2(\P^{p+1})}(t)).
\end{array}
$$ }
This implies that the odd betti numbers of $C_{p,2}(\P^n)$ are zeroes since those of Grassmannians and the symmetric product of Grassmannians
are zeroes.  Similar computations works for the virtual Hodge numbers.
\end{proof}

\subsection{Ruledness and Rationality of irreducible components}
Since $C_{p,d}(\P^n)$ admits a $\C$-action with an isolated fixed point (\cite{Horrocks}), each of its irreducible component is preserved under the action.
Hence each irreducible component of $C_{p,d}(\P^n)$ admits a  $\C$-action with an isolated fixed point.  From Lieberman's result (\cite[Th.1]{Lieberman1}), we obtain
that each component of $C_{p,d}(\P^n)$ is an \emph{ruled} variety.

In general, the  rationality of  irreducible components of $C_{p,d}(\P^n)$ is an open problem, which can be found in Shafarevich's book (see \cite{Shafarevich}).
As a remark, Shafarevich said  ``\emph{Whether every irreducible component of them is rational, in general,  is  `an apparently very difficult but very fundamental problem'}."

\begin{question}[Shafarevich]\label{ques5.3}
 Is each irreducible component
of $C_{p,d}(\P^n)$ is rational for all $0\leq p\leq n$ and $d\geq 1$?
\end{question}

Surely, $C_{p,1}(\P^n)$ is rational for all $0\leq p\leq n$ since $C_{p,1}(\P^n)$ is just the complex Grassmannian manfold $G(p+1,\C^{n+1})$, which is rational.
When $p=1,n=3$, the irreducible components of  $C_{1,d}(\P^3)$ have been shown to be rational for $d$ small (\cite{Shafarevich}).
However, even if  the proof of rationality for  $C_{0,d}(\P^n)$ is  nontrivial (see \cite[Ch.4,Th2.8]{Gelfand-Kapranov-Zelevinsky} and references cited there).

For $d=2$ and $0\leq p\leq n$, the explicit structure of each irreducible component has been studied in details in \cite{Hu3}. From that, one obtains that each  irreducible component is rational
since the symmetric products of complex Grassmannian manfolds are rational.

It is not hard to show that an irreducible component of the maximal dimension in $C_{p,d}(\P^n)$ is rational. This follows from  the fact that the symmetric product of a rational variety is  rational
and at least one irreducible component of the maximal dimension either consists of all $d$-tuples $p$-dimensional linear spaces in $\P^n$ or
irreducible $p$-dimensional hypersurfaces degree $d$ in $\P^{p+1}\subset \P^n$ (see \cite{Lehmann}).

However,
%I have just gotten to know recently that
the answer to Question \ref{ques5.3} is negative, as explained in the following counterexample, which should be known earlier but
it cannot be found in the literature.
\begin{example}\label{Exam5.4}
Let $M_g$ ($g\geq 2$)be the moduli space of smooth complex algebraic curves of genus $g$.
Now we recall the  construction of  $M_g$ from the geometric invariant theory (cf. \cite{Harris-Morrison}).
Let $\mathcal{H}_{d,g,r}$ be the Hilbert scheme  of curves of degree $d$ and (arithmetic)genus $g$ in $\P^r$.
For any integer $n\geq 3$, a smooth curve $C$ can be embedded as a curve of degree $2(g-1)n$  in $\P^N$ by the complete linear series $|nK_C|$, where
$N=(2n-1)(g-1)-1$. Let us consider pairs $(C,\varphi:C\to \P^N)$, where $C$ is a curve and $\varphi:C\to \P^N$ is an $n$-canonical embedding. The family
of all such pairs corresponds to a locally closed subset $\mathcal{K}$ of the Hilbert scheme $\mathcal{H}_{d,g,N}$ of smooth curves of degree $d$ and genus $g$ in $\P^N$, where $d=2(g-1)$. The projective general linear
 group $PGL(N+1,\C)$ acts on $\mathcal{K}$ with quotient is $M_g$. The locally closed subset $\mathcal{K}$ is just a Zariski open set of an irreducible component of $C_{1,d}(\P^N)$.
 Therefore, there exists an irreducible component of $C_{1,d}(\P^N)$, denoted by $I_{1,d}(\P^N)$, and a dominant rational  map $I_{1,d}(\P^N)\dashrightarrow M_g$ for each $g\geq 2$.

Note that it was shown in \cite{Eisenbud-Harris2} and \cite{Harris-Mumford} that  $M_g$ is a quasi-projective variety of the general type for $g\geq 24$. This together with the
dominant rational map $I_{1,d}(\P^N)\dashrightarrow M_g$ implies that $I_{1,d}(\P^N)$ is not rational since a variety dominated by an rational variety is a unirational variety.
This completes the construction of the example.
\end{example}
%\begin{proposition}
%For $d\geq 4$, the  the maximal dimension  irreducible component of $C_{1,d}(\P^3)$ is unique and rational.
%\end{proposition}
One can  go further  to construct counterexamples to Shafarevich's question for cycles in arbitrary dimensions.

Fix a hyperplane $\P^n\subset \P^{n+1}$ and a point $P=[0:\cdots:0:1]\in \P^{n+1}-\P^n$.
 Let $V\subset \P^n$ be any closed algebraic subset. The algebraic suspension
of $V$ with vertex $P$ (i.e., cone over $P$) is the set
$$\Sigma_P V:=\cup\{ l~|~ l \hbox{ is a projective line through $P$ and intersects $V$}\}.
$$

Set $$T_{p+1,d}(\P^{n+1}):=\bigg\{ c=\sum n_iV_i\in C_{p+1,d}(\P^{n+1})| \dim(V_i\cap \P^n)=p, \forall i\bigg\}.$$
It has been shown in \cite{Lawson1} that $T_{p+1,d}(\P^{n+1})\subset C_{p+1,d}(\P^{n+1})$ is a Zariski open set and
there is a  continuous algebraic surjective map $T_{p+1,d}(\P^{n+1})\to C_{p,d}(\P^n)$  (cf. \cite{Friedlander1} for the case over arbitrary algebraically closed field).
A continuous algebraic map is a rational map which can be extended to a continuous map in the complex topology.
Hence, for each irreducible component $I_{p,d,n}$ of $C_{p,d}(\P^n)$, there exists an irreducible component
$J_{p+1,d,n+1}$ of $T_{p+1,d}(\P^{n+1})$ such that $J_{p+1,d,n+1}\to I_{p,d,n}$ is a   continuous algebraic surjective map. In particular, it is a  dominant rational  map.
Let $\overline{J}_{p+1,d,n+1}$ be the closure of $J_{p+1,d,n+1}$ in $C_{p+1,d}(\P^{n+1})$. Then we get a dominant rational  map
 $\overline{J}_{p+1,d,n+1}\dashrightarrow I_{p,d,n}$ from $J_{p+1,d,n+1}\to I_{p,d,n}$. Since $T_{p+1,d}(\P^{n+1})\subset C_{p+1,d}(\P^{n+1})$ is a Zariski open set, $\overline{J}_{p+1,d,n+1}$
 is an irreducible component $I_{p+1,d,n+1}$ of $C_{p+1,d}(\P^{n+1})$.
 So if $I_{p,d,n}\dashrightarrow M_g$ is a  dominant rational   map, then  $\overline{J}_{p+1,d,n+1}\dashrightarrow M_g$ is also a dominant rational map.
Therefore there is a   dominant rational  map is $I_{p+1,d,n+1}\dashrightarrow M_g$
  from the irreducible component $I_{p+1,d,n+1}$ of $C_{p+1,d,n+1}$ to the moduli space of curve of  genus $g$.
 From the construction of Example \ref{Exam5.4}, there exist $d,n$ such that $I_{1,d,n}\dashrightarrow M_g$ is a dominant rational map for $g\geq 2$.  Moreover, $M_g$ is of general type if $g\geq 24$
 by results in \cite{Eisenbud-Harris2} and \cite{Harris-Mumford}. Hence $I_{p+1,d,n+1}$ is not a rational variety since it dominates  a variety of general type.

In summary,  the above argument provides a proof to the following theorem  by induction.
\begin{theorem}\label{Thm5.5}
For any $p\geq 1$, there exists an irreducible component $I_{p,d,n}$ of $C_{p,d}(\P^n)$ such that $I_{p,d,n}$ is not rational if $d, n$  large.
\end{theorem}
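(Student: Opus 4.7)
The plan is to proceed by induction on $p$, using Example \ref{Exam5.4} as the base case $p=1$ and the algebraic suspension construction to propagate non-rationality to higher-dimensional cycles. The strategy rests on the principle that a rational variety can only dominate unirational varieties, so producing a dominant rational map from an irreducible component of $C_{p,d}(\P^n)$ onto a variety of general type will rule out rationality.

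For the base case, I would invoke Example \ref{Exam5.4} directly: there exist integers $d_0, N_0$ and an irreducible component $I_{1,d_0}(\P^{N_0})$ of $C_{1,d_0}(\P^{N_0})$ together with a dominant rational map $I_{1,d_0}(\P^{N_0})\dashrightarrow M_g$ for any chosen $g\geq 2$, coming from the $n$-canonical embedding of curves and the $PGL$-quotient description of $M_g$. Fixing $g\geq 24$, the results of Harris--Mumford and Eisenbud--Harris imply $M_g$ is of general type, hence not unirational, so $I_{1,d_0}(\P^{N_0})$ is not rational.

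For the inductive step, suppose $I_{p,d,n}$ is an irreducible component of $C_{p,d}(\P^n)$ admitting a dominant rational map to $M_g$ (with $g\geq 24$). Applying the algebraic suspension construction with vertex $P\in \P^{n+1}-\P^n$, I would use the fact (recalled in the excerpt from Lawson's work and Friedlander's generalization) that $T_{p+1,d}(\P^{n+1})$ is a Zariski open subset of $C_{p+1,d}(\P^{n+1})$ and that there is a continuous algebraic surjective map $T_{p+1,d}(\P^{n+1})\to C_{p,d}(\P^n)$. Restricting this surjection over $I_{p,d,n}$, I would pick an irreducible component $J_{p+1,d,n+1}$ of the preimage that surjects onto $I_{p,d,n}$, and take its closure $\overline{J}_{p+1,d,n+1}$ in $C_{p+1,d}(\P^{n+1})$. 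Because $T_{p+1,d}(\P^{n+1})$ is Zariski open in $C_{p+1,d}(\P^{n+1})$, this closure is an irreducible component $I_{p+1,d,n+1}$ of $C_{p+1,d}(\P^{n+1})$. Composing the dominant rational maps then yields a dominant rational map $I_{p+1,d,n+1}\dashrightarrow M_g$, which forces $I_{p+1,d,n+1}$ to be non-rational by the general-type argument.

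The main obstacle I anticipate is the bookkeeping required to ensure that the suspension-induced surjection actually lands on the specific irreducible component of the target we need, and that one can consistently pass to closures inside $C_{p+1,d}(\P^{n+1})$ while preserving the dominance onto $M_g$. In particular, one needs to check that choosing a preimage component does not inadvertently collapse to a lower stratum; this is handled by the Zariski openness of $T_{p+1,d}(\P^{n+1})$ in $C_{p+1,d}(\P^{n+1})$, which guarantees that the closure of an irreducible subvariety of $T_{p+1,d}(\P^{n+1})$ remains an irreducible component whenever the subvariety already was one. With this taken care of, iterating the construction from the base case $p=1$ produces, for every $p\geq 1$, pairs $(d,n)$ (both growing with $p$) and an irreducible component $I_{p,d,n}\subset C_{p,d}(\P^n)$ that dominates $M_g$ for some $g\geq 24$ and therefore cannot be rational.
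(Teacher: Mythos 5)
Your proposal is correct and follows essentially the same route as the paper: the base case is Example \ref{Exam5.4} (dominating $M_g$ with $g\geq 24$, which is of general type by Harris--Mumford and Eisenbud--Harris), and the inductive step uses the algebraic suspension, the Zariski openness of $T_{p+1,d}(\P^{n+1})$ in $C_{p+1,d}(\P^{n+1})$, and the surjective continuous algebraic map $T_{p+1,d}(\P^{n+1})\to C_{p,d}(\P^n)$ to pass the dominant rational map onto $M_g$ to an irreducible component one dimension higher. The bookkeeping point you flag (that the closure of the chosen component of the preimage is a full irreducible component and that dominance onto $M_g$ is preserved) is handled in the paper exactly as you suggest, via the openness of $T_{p+1,d}(\P^{n+1})$.
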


\begin{remark}
The $I_{p,d,n}$ in Theorem \ref{Thm5.5} admits a $\C^*$-action with isolated fixed points but it is not rational.
\end{remark}

\subsection{Chow groups and Lawson homology }

By using the results in the sections above, we shall compute the Chow groups of 0-cycles  and Lawson homology of 1-cycles for Chow varieties $\Ch_0(C_{p,d}(\P^n))$.

We consider the action of $\C^*$ on $\P^{n}$ given by setting
$$\Phi_t([z_0:...:z_{n}])=[t_0z_0:...:t_nz_{n}],$$
where $t=(t_0:...:t_{n})\in (\C^*)^{n+1}$ and $[z_0:...:z_{n}]$ are homogeneous coordinates for $\P^{n+1}$.

This action on $\P^{n}$ induces an action of $(\C^*)^n$ on $C_{p,d}(\P^{n})$.
From the definition of the action $(\C^*)^n$ on $\P^{n}$, it is pretty clear that any irreducible subvariety $V$ of $\dim V=p$
is invariant under the action $(\C^*)^n$ if and only if  $V$ is spanned by $(p+1)$-coordinate points in $\P^n$ and hence the
fixed point set is finite.

\begin{proposition}
For all $d>0,0\leq p\leq n$, we have
$$\Ch_0(C_{p,d}(\P^n))\cong \Z.$$
\end{proposition}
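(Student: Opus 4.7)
The plan is to apply Proposition \ref{Prop4.19} to $X = C_{p,d}(\P^n)$. Two ingredients are required: $C_{p,d}(\P^n)$ must be connected, and it must admit a $\C^*$-action with only isolated fixed points. The first is already recorded in the subsection on homotopy and homology groups, where one observes that every cycle $c \in C_{p,d}(\P^n)$ can be deformed through $C_{p,d}(\P^n)$ to $d\cdot L$ for a fixed $p$-plane $L \subset \P^n$.

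For the second ingredient, I would start from the $(\C^*)^n$-action on $C_{p,d}(\P^n)$ induced by the coordinate action on $\P^n$ described just before the statement, and identify its fixed locus. Any $(\C^*)^n$-invariant irreducible $p$-dimensional subvariety of $\P^n$ must be a coordinate $p$-plane $L_I := \mathrm{span}\{e_i : i \in I\}$ for some $(p+1)$-element subset $I \subset \{0,\dots,n\}$, since these are the only irreducible torus-invariant subvarieties of the right dimension. Hence a cycle is torus-fixed iff each of its irreducible components is some $L_I$, and the torus-fixed locus consists of the finite set of effective combinations $\sum_I a_I L_I$ with $\sum_I a_I = d$. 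Its cardinality $\binom{\binom{n+1}{p+1}+d-1}{d}$ reproduces the Lawson--Yau Euler characteristic, which is a useful sanity check.

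To conclude, I would replace the torus action by a single $\C^*$-action via a sufficiently generic one-parameter subgroup $\lambda : \C^* \hookrightarrow (\C^*)^n$. For generic $\lambda$ the fixed locus $C_{p,d}(\P^n)^{\lambda(\C^*)}$ coincides set-theoretically with $C_{p,d}(\P^n)^{(\C^*)^n}$ and is therefore finite, so Proposition \ref{Prop4.19} applies and yields $\Ch_0(C_{p,d}(\P^n)) \cong \Z$. The main point requiring care is this genericity step, which is delicate because $C_{p,d}(\P^n)$ may be singular at the torus-fixed points; I would handle it by choosing $\lambda$ outside the finitely many kernels of the nonzero $(\C^*)^n$-weights arising in an equivariant linearization of the action (for instance via an equivariant projective embedding), rather than by appealing to tangent spaces at the fixed points.
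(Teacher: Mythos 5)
Your proposal is correct and takes essentially the same route as the paper: the coordinate $(\C^*)^{n}$-action on $C_{p,d}(\P^n)$ has finite fixed locus (effective combinations of coordinate $p$-planes), one replaces the torus by a single generic $\C^*$ with the same fixed points, and Proposition \ref{Prop4.19} together with connectedness gives $\Ch_0(C_{p,d}(\P^n))\cong\Z$; your treatment of the generic one-parameter subgroup merely fills in a step the paper asserts without proof. One small correction to that step: $\lambda$ must avoid the kernels of the pairwise \emph{differences} of the torus weights of the linearization, not just of the nonzero weights themselves (two distinct weights could pair equally with $\lambda$), but since these are still finitely many hyperplanes the argument goes through unchanged.
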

\begin{proof}

Since $C_{p,d}(\P^n)$ admits a $(\C^*)^n$-action with isolated fixed points, one can get a $\C^*$-action
on  $C_{p,d}(\P^n)$ with isolated fixed points. Now by Proposition \ref{Prop4.19}, we get $\Ch_0(C_{p,d}(\P^n))\cong \Z$
since $C_{p,d}(\P^n)$ is connected.
\end{proof}

\begin{proposition}
For all $d>0,0\leq p\leq n$, we have
$$L_1H_k(C_{p,d}(\P^n))\cong H_k(C_{p,d}(\P^n),\Z)$$ for all $k\geq 2$. In particular, $L_1H_2(C_{p,d}(\P^n))\cong\Z$.
Equivalently, the homotopy groups of the space of 1-cycles  of the Chow variety $C_{p,d}(\P^n)$ coincides with the corresponding
singular homology groups with integer coefficients, i.e.,
$$\pi_{k-2}\cZ_1(C_{p,d}(\P^n))\cong H_k(C_{p,d}(\P^n),\Z)$$
for all $k\geq 2$.
\end{proposition}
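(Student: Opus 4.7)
The plan is to reduce the statement to Proposition \ref{Prop2.4}. Example \ref{Exam4.26} warns that a $\C^{*}$-action with isolated fixed points is \emph{not} sufficient to force the cycle class map $L_{1}H_{k}\to H_{k}$ to be an isomorphism; what one really needs is the stronger additive structure. Fortunately, such a structure is available: by Horrocks (cited in Subsection \ref{Section5}.6), the Chow variety $C_{p,d}(\P^{n})$ admits a $\C$-action with an isolated fixed point. This is obtained from a suitable unipotent one-parameter subgroup of $PGL(n+1,\C)$ acting on $\P^{n}$ whose unique invariant $p$-plane $L_{0}$ yields the unique fixed cycle $d\cdot L_{0}$.

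First I would assemble the two hypotheses of Proposition \ref{Prop2.4}: connectedness of $C_{p,d}(\P^{n})$, which is recalled in Subsection \ref{Section5}.3 (every cycle is path-connected to $d\cdot L$ for a fixed $p$-plane $L$), together with the Horrocks $\C$-action with isolated fixed point just discussed. With these two inputs Proposition \ref{Prop2.4} applies verbatim and gives the isomorphism
$$L_{1}H_{k}(C_{p,d}(\P^{n}))\cong H_{k}(C_{p,d}(\P^{n}),\Z)$$
for every $k\geq 2$, as well as the vanishing of ${\rm Griff}_{1}(C_{p,d}(\P^{n}))$.

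For the ``in particular'' clause, I would set $k=2$ and combine the isomorphism above with the known computation $H_{2}(C_{p,d}(\P^{n}),\Z)\cong \Z$. The latter follows from the simply-connectedness of $C_{p,d}(\P^{n})$ (Lawson 1989, Fujiki 1995) and the Hurewicz theorem, which give $H_{2}\cong \pi_{2}$, combined with the identification $\pi_{2}(C_{p,d}(\P^{n}))\cong \Z$ proved in \cite{Hu3} and recorded in Subsection \ref{Section5}.3. The reformulation in terms of the homotopy groups of the cycle space is then tautological: it is merely the definition $L_{1}H_{k}(X):=\pi_{k-2}(\cZ_{1}(X))$ recalled in Subsection 2.2.

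The main obstacle is not in the deduction itself but in lining up the input: one must be sure that Horrocks's additive action really does have finite fixed locus on the (possibly singular and reducible) variety $C_{p,d}(\P^{n})$, so that Proposition \ref{Prop2.4} is legally applicable. Once this is granted, and the connectedness and simply-connectedness results from Subsection \ref{Section5}.3 are invoked, both conclusions follow at once, and the contrast with Example \ref{Exam4.26} is explained by the fact that the Chow varieties carry the full additive $\C$-action and not merely a $\C^{*}$-action with isolated fixed points.
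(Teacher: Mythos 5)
Your reduction hinges on the claim that a one-parameter unipotent subgroup of $PGL(n+1,\C)$ acts on $C_{p,d}(\P^n)$ with finite fixed locus, the unique fixed cycle being $d\cdot L_0$. That claim is false, and it is precisely the point you flagged and then assumed away. Already for $C_{1,2}(\P^2)$: the regular unipotent flow $[x_0:x_1:x_2]\mapsto [x_0:x_1+tx_0:x_2+tx_1+\tfrac{t^2}{2}x_0]$ preserves every conic $2x_0x_2-x_1^2=c\,x_0^2$, $c\in\C$, so the induced $\C$-action on the Chow variety fixes a positive-dimensional family of cycles in addition to those supported on the invariant line $x_0=0$. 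In general, orbit closures of the additive flow and their multiples produce plenty of invariant cycles of any degree. Horrocks's theorem gives a \emph{single} fixed point only for the full solvable group $G$ (an iterated extension of copies of the additive group), not for a one-parameter additive subgroup, so Proposition \ref{Prop2.4} cannot be applied verbatim to $C_{p,d}(\P^n)$.

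The paper's proof closes exactly this gap by an induction along Horrocks's group: writing $G=G_r\supset G_{r-1}\supset\cdots\supset G_0=\{0\}$ with $G_i/G_{i-1}\cong\C$, one gets a chain $X^{G_r}\subset X^{G_{r-1}}\subset\cdots\subset X^{G_0}=X$, where each $X^{G_{i-1}}$ carries a $\C$-action (induced by $G_i/G_{i-1}$) whose fixed locus is $X^{G_i}$ --- in general \emph{not} finite. At the top of the chain $X^{G_r}$ is a single point, and at each step one invokes not Proposition \ref{Prop2.4} itself but its generalization (proved by the same method, as in the remark following Proposition \ref{Prop2.2}): if $L_1H_k\to H_k$ is an isomorphism on the fixed point set of a $\C$-action, then it is an isomorphism on the ambient variety. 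Iterating down to $X^{G_0}=C_{p,d}(\P^n)$ gives the statement. To repair your argument you must either adopt this inductive scheme or actually exhibit a $\C$-action on the Chow variety with finite fixed locus, which your construction does not provide; note also that Example \ref{Exam4.26} shows the $\C^*$-action with isolated fixed points on $C_{p,d}(\P^n)$ is of no help here. Your derivation of the ``in particular'' clause ($H_2\cong\Z$ via simple connectedness, Hurewicz and $\pi_2\cong\Z$) and the tautological reformulation through $L_1H_k(X)=\pi_{k-2}(\cZ_1(X))$ are fine.
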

\begin{proof}
By \cite{Horrocks}, we know $C_{p,d}(\P^n)$ admits an action of a solvable group $G$ with a single fixed point, where
$G=G_r\supset G_{r-1}\supset\cdots\supset G_1\supset G_0=\{0\}$ is a normal series  with quotients $G_i/G_{i-1}$ isomorphic to the additive group scheme $\C$.

By Proposition \ref{Prop2.4}, we can show that if $X$ admits an action of a solvable group $G$ with a single fixed pint, then
$L_1H_k(X)\cong H_k(X,\Z)$.  In fact, we have the following inclusion $X^{G}=X^{G_r}\subset X^{G_{r-1}}\subset\cdots\cdots X^{G_{2}}\subset X^{G_{1}} \subset X^{G_{0}}=X$.
Since $G_r/G_{r-1}\cong\C$ and $X^{G_{r}}$ is a single point, we get byBy Proposition \ref{Prop2.4} that $L_1H_k(X^{G_{r-1}})\cong H_k(X^{G_{r-1}},\Z)$ from the fact $L_1H_k(X^{G_{r}})\cong H_k(X^{G_{r}},\Z)$.
Since $G_i/G_{i-1}\cong\C$ for all $i\geq 1$  and by induction andBy Proposition \ref{Prop2.4}, we have   $$L_1H_k(X^{G_{0}})\cong H_k(X^{G_{0}},\Z),$$  that is, $L_1H_k(X)\cong H_k(X ,\Z)$.

By applying this to $X=C_{p,d}(\P^n)$, we have $L_1H_k(C_{p,d}(\P^n))\cong H_k(C_{p,d}(\P^n),\Z)$ for all  $k\geq 2$.
This completes the proof of the proposition.
\end{proof}

Similar to Conjecture \ref{Conj5.1}, we post another wild conjecture
on their Chow groups and Lawson homology groups.
\begin{conjecture}
For $d\geq 0$ and $0\leq p\leq n$, one has
$$\Ch_q(C_{p,d}(\P^n))\cong H_{2q}(C_{p,d}(\P^n),\Z)$$ for all $ q\geq 0$
and $L_qH_k(C_{p,d}(\P^n))\cong H_k(C_{p,d}(\P^n),\Z)$ for all $k\geq 2q\geq 0$.
\end{conjecture}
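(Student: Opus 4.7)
The plan is to establish the following generalization of Propositions \ref{Prop2.2} and \ref{Prop2.4}: if $X$ is a (possibly singular) connected projective variety admitting a $\C$-action with isolated fixed points, then the cycle class maps
\[
\Ch_q(X)\to H_{2q}(X,\Z), \qquad \Phi_{q,k}:L_qH_k(X)\to H_k(X,\Z)
\]
are isomorphisms for all $q\geq 0$ and all $k\geq 2q$. Once this generalization is in hand, the conjecture follows by exactly the iterative argument used in the proposition immediately preceding it: by Horrocks's theorem, $C_{p,d}(\P^n)$ admits an action of a solvable group $G=G_r\supset G_{r-1}\supset\cdots\supset G_0=\{0\}$ with all quotients $G_i/G_{i-1}\cong\C$ and with a single global fixed point. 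One then climbs the chain $C_{p,d}(\P^n)=X^{G_0}\supset X^{G_1}\supset\cdots\supset X^{G_r}=\{\mathrm{pt}\}$ from the bottom, applying the generalized statement to the induced $G_i/G_{i-1}\cong\C$-action on $X^{G_{i-1}}$ whose fixed locus $X^{G_i}$ already satisfies the conclusion by induction on $i$. The base case is trivial, and the chain terminates at $C_{p,d}(\P^n)$ itself.

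To prove the generalized statement I would imitate the strategy of Proposition \ref{Prop4.19} and proceed by induction on $\dim X$. The key geometric input is that a $\C$-action with isolated fixed points produces, via Bialynicki--Birula, a $\C$-invariant Zariski open $U\subset X$ isomorphic to $U'\times\C$ together with a closed complement $Z=X-U$ of strictly smaller dimension. Apply the localization long exact sequences both of Bloch's higher Chow groups and of Lawson homology to the pair $(X,Z)$, use the homotopy invariance $\Ch_q(U'\times\C,\ast)\cong\Ch_q(U',\ast)$ and $L_qH_k(U'\times\C)\cong L_qH_k(U')$, and compare to the corresponding localization sequences in Borel--Moore or singular homology, where $H_*^{BM}(U'\times\C)\cong H_{*-2}^{BM}(U')$. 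A Five Lemma argument in the resulting ladder, with the isomorphism for $Z$ supplied by the inductive hypothesis, promotes the isomorphism from $Z$ to $X$.

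The main obstacle is the surjectivity of the higher cycle class map $\Ch_q(U,1)\to H^{BM}_{2q+1}(U,\Z)$ that one needs to close the Five Lemma. In the $q=0$ case treated in Proposition \ref{Prop4.19}, this surjectivity followed easily from Poincar\'e duality, the K\"unneth formula, and the well-known surjectivity of $\Ch_0\to H^{BM}_0$; for $q\geq 1$ no such elementary surjectivity is available on an arbitrary open $U\cong U'\times\C$ and it seems to require genuine input on higher Chow groups of products. The Lawson-homology half of the conjecture faces a parallel difficulty: the injectivity of $\Phi_{q,k}$ on the kernel of the localization map again reduces to a higher-$q$ cycle class surjectivity statement on the affine pieces, combined with an additive-action analogue of Lemma \ref{Lemma4.23} (which in the $\C$-case degenerates by $\A^1$-homotopy invariance). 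These technical surjectivities in the higher-index range of Bloch's sequence, and the corresponding compatibilities on the Lawson side, appear to be precisely why the statement is only posed as a conjecture rather than proven outright.
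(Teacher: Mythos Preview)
This statement is labeled a \emph{conjecture} in the paper, and the paper offers no proof; the remark immediately following it says explicitly that even the case $q=1$ cannot be shown or disproved for $1\le p\le n-2$ and $d$ large. So there is nothing to compare against, and your final paragraph is right to flag that the argument does not close.

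That said, your diagnosis of the obstruction understates it. You isolate the missing surjectivity of the higher cycle class map $\Ch_q(U,1)\to H^{BM}_{2q+1}(U,\Z)$, but the Five Lemma also requires the map $\Ch_q(U)\to H^{BM}_{2q}(U,\Z)$ to be an isomorphism. With $U\cong U'\times\C$ and homotopy invariance, this becomes $\Ch_{q-1}(U')\cong H^{BM}_{2q-2}(U',\Z)$ for an \emph{arbitrary} smooth quasi-projective $U'$ carrying no group action at all. Already for $q=1$ this asks that $\Ch_0(U')\cong H^{BM}_0(U',\Z)$, which fails in the simplest examples (take $U'$ an affine open in a curve of positive genus: the right side vanishes while the left is the Jacobian). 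So the inductive scheme breaks not for a ``technical'' reason but because the localization ladder transfers the problem to a variety where the conclusion is genuinely false. The proofs of Propositions~\ref{Prop2.2} and~\ref{Prop4.19} escape this only because for $q=0$ both $\Ch_0(U)$ and $H^{BM}_0(U)$ vanish outright.

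Consequently, your proposed ``generalization'' --- that every connected projective $X$ with a $\C$-action and isolated fixed points satisfies $\Ch_q(X)\cong H_{2q}(X,\Z)$ for all $q$ --- is itself a conjecture at least as strong as the one you are trying to prove, and your inductive argument gives no evidence for it. The Horrocks iteration in the second half of your plan is fine and matches how the paper handles the $q\le 1$ cases, but it only transports the problem; it does not create the missing input.
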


\begin{remark}
For $p=0$, $C_{p,d}(\P^n)\cong \sp^d(\P^n)$, one can show that these conjectures are true in rational coefficients.
For  $1\leq p\leq n-2$ and $d$ large, we have no idea to show or disprove $\Ch_q(C_{p,d}(\P^n))\cong H_{2q}(C_{p,d}(\P^n),\Z)$ even for $q=1$.
\end{remark}

\emph{Acknowledgements.}
The project was partially sponsored by  STF of Sichuan province, China(2015JQ0007) and NSFC(11771305).

\end{document}